\title{\bf{Finite determinacy of matrices and ideals}}
\author{
       \bf{ Gert-Martin Greuel and Thuy Huong Pham}\\
 }
\date{}
\DeclareMathOperator{\rank}{rank}
\DeclareMathOperator{\mng}{mng}
\DeclareMathOperator{\height}{ht}
\DeclareMathOperator{\depth}{depth}
\DeclareMathOperator{\variety}{V}
\DeclareMathOperator{\characteristic}{char}
\DeclareMathOperator{\order}{ord}
\DeclareMathOperator{\annihilator}{Ann}
\DeclareMathOperator{\spectrum}{Spec}
\DeclareMathOperator{\Ann}{Ann}
\newtheorem{Definition}{ Definition}[section]
\newtheorem{Theorem}[Definition]{Theorem}
\newtheorem{Remark}[Definition]{Remark}
\newtheorem{Proposition}[Definition]{Proposition }
\newtheorem{Corollary}[Definition]{Corollary}
\newtheorem{Example}[Definition]{Example}
\newtheorem{Lemma}[Definition]{Lemma}
\newtheorem{Problem}[Definition]{Problem}
\newtheorem{Conjecture}[Definition]{Conjecture}
\newcommand{\R}{\mathbb{R}}
\newcommand{\N}{\mathbb{N}}
\newcommand{\C}{\mathbb{C}}
\newcommand{\A}{\mathbb{A}}
\begin{document}
\maketitle

\begin{abstract} 

The main aim of this paper is to characterize ideals $I$ in the power series ring $R=K[[x_1,\ldots,x_s]]$ that are finitely determined up to contact equivalence by proving that this is the case if and only if $I$ is an isolated complete intersection singularity, provided dim$(R/I) > 0$ and $K$ is an infinite field (of arbitrary characteristic). Here two ideals $I$ and $J$ are contact equivalent if 
the local $K$--algebras $R/I$ and $R/J$ are isomorphic. If $I$ is minimally generated by $a_1,\ldots,a_m$, we call $I${ \em finitely contact determined} if it is contact equivalent to any ideal $J$ that can be generated by  $b_1,\ldots,b_m$ with $a_i - b_i \in \langle x_1,\ldots,x_s\rangle^k$ for some integer $k$. 
We give also computable and semicontinuous determinacy bounds.

%and discuss several open problems. 
%Finite determinacy is an important property, in particular for the classification of singularities, and has been intensively studied for complex and real analytic maps. 

The above result is proved by considering left--right equivalence on the ring 
$M_{m,n}$ of $m\times n$ matrices $A$ with entries in $R$ and we show that the Fitting ideals of a finitely determined matrix in $M_{m,n}$ have maximal height, a result of independent interest. The case of ideals is treated by considering 1-column matrices. Fitting ideals together with a special construction are used to prove the characterization of finite determinacy for ideals in $R$. 
%For 1-column matrices we introduce  a special construction, leaving the general case as a conjecture. 
Some results of this paper are known in characteristic 0, but they need new (and more sophisticated) arguments in positive characteristic partly because the tangent space to the orbit of the left-right group cannot be described in the classical way. In addition we point out several other oddities, including the concept of specialization for power series, where the classical approach (due to Krull) does not work anymore.  We include some open problems and a conjecture.

\end{abstract}

%%%%%%%%%%%%%%%%%%%%%%%%%%%%%%%%%%%%%%%%%%%%%%%%
\section{Introduction}\label{introduction}
Throughout this paper let $K$ denote a field of arbitrary characteristic and 
\[R:=K[[{\bf{x}}]]=K[[x_1,\ldots, x_s]]\] 
the formal power series ring over $K$ in $s$ variables with maximal ideal $\mathfrak{m} = \langle x_1, \ldots , x_s \rangle$. We denote by 
\[M_{m,n}:=Mat(m,n, R)\]
the set of all $m\times n$ matrices ($m$ rows and $n$ columns) with entries in $R$. 
Two matrices $A, B \in M_{m,n}$ are called {\it left--right equivalent}, denoted  $A\mathop\sim\limits^{G} B$, if they belong to the same orbit of the left--right group 
\[G:=(GL(m,R)\times GL(n,R)^{op}) \rtimes \mathcal{R},\]
\noindent where $G^{op}$ is the opposite group of a group $G$ and 
$\mathcal{R}:=Aut(R)$. The group $G$  acts on $M_{m,n}$ by
\[(U, V, \phi, A)\mapsto U\cdot\phi(A)\cdot V,\]
with  $U\in GL(m,R), \ V\in GL(n,R), \ \phi\in Aut(R)$ and if $A=[a_{ij}({\bf x})]$ then 
 $\phi(A)=[a_{ij}\left(\phi({\bf x})\right)]$ with $\phi({\bf x})=\left(\phi(x_1),\ldots,\phi(x_s) \right)$. 
A matrix $A\in  M_{m,n}$ is said to be {\it $G$ $k$--determined} if for each matrix $B\in  M_{m,n}$ with $B-A\in \mathfrak{m}^{k+1} M_{m,n}$ we have $B\mathop\sim\limits^{G} A$, $\mathfrak{m} = \langle x_1,\ldots,x_s\rangle$ the maximal ideal of $R$. 
$A$ is called {\it finitely $G$--determined} if there exists a positive integer $k$ such that it is $G$ $k$--determined.

Finite determinacy for analytic map-germs and ideals with respect to various equivalence relations has been intensively studied by e.g.  \cite{Hi65}, \cite{Tou68}, \cite{Mat68}, \cite{Gaf79}, \cite{Ple80}, \cite{Dam81}, \cite{Wal81},  \cite{BdPW87}, \cite{CS97}, and many more, mainly with the aim to find sufficient conditions for finite determinacy.   
%The paper by Hironaka  \cite{Hi65} is of special interest for us and we comment on it in Remark \ref{r1.7}.
 In \cite{BK16}, the authors study finite determinacy for matrices with entries in the convergent power series ring $\C\{{\bf x}\}$, without giving determinacy bounds. In  \cite {Pha16} and \cite {GP16} the authors started the study of finite determinacy for matrices of power series over fields of arbitrary characteristic. The case of one power series, i.e. $m=n=1$, is classical over the complex and real numbers. It was treated over a field of arbitrary characteristic in \cite {GK90} for contact equivalence and in \cite {BGM12} for right and contact equivalence.

Let us give a rough overview of the main results, for more detailed statements we refer to the main body of the paper. 
For $A\in \mathfrak{m}\cdot M_{m,n}$ we define the following submodules of $M_{m,n}$,
  \begin{align*}
  \tilde T_A(GA)&:=\langle E_{m, pq}\cdot A\rangle +\langle A\cdot E_{n, hl}\rangle+ \mathfrak{m}\cdot\left\langle\frac{\partial A}{\partial x_\nu}\right\rangle \hskip 6pt{\text{resp.}}\\
  \tilde T^e_A(GA)&:=\langle E_{m, pq}\cdot A\rangle +\langle A\cdot E_{n, hl}\rangle+ \left\langle\frac{\partial A}{\partial x_\nu}\right\rangle,
  \end{align*} 
and call $\tilde T_A(GA)$ resp. $\tilde T^e_A(GA)$ the {\it tangent image} resp. {\em extended tangent image} at $A$ to the orbit $GA$.
  Here $\langle E_{m, pq}\cdot A\rangle$ is the $R$-submodule generated by  $E_{m, pq}\cdot A$, $p,q=1,\ldots,m$, with $E_{m,pq}$ the $(p,q)$-th canonical matrix of $Mat(m,m,R)$ (1 at place $(p,q)$ and 0 else) and $\left\langle\frac{\partial A}{\partial x_\nu}\right\rangle$ is the $R$-submodule generated by the matrices $\frac{\partial A}{\partial x_\nu} =\left [\frac{\partial a_{ij}}{\partial x_\nu} ({\bf x})\right], \nu = 1, \ldots, s$. The submodules $\tilde T_A(GA)$ and $\tilde T^e_A(GA)$ were introduced in  \cite{GP16} and it is proved in  \cite[Proposition 2.5]{GP16} that $\tilde T_A(GA)$ is the image of the tangent map of the orbit map $G \to GA$. Moreover, the following is proved in the same paper, with $\order(A)$ the minimum of the orders of the entries of $A$.

 \begin{Theorem}\rm(\cite[Theorem 3.2]{GP16})\label{GP16}
 %{\bf Theorem 1 (\cite{GP16})}{\label{[GP16]}}
 		\begin{enumerate}
		\item Let $A=[a_{ij}]\in \mathfrak{m}\cdot M_{m,n}$. If there is an integer $k\ge 0$ such that
 	\begin{align*}
 	\mathfrak{m}^{k+2}\cdot M_{m,n}\subset \mathfrak{m}\cdot \tilde T_A(GA)\tag{1.1}\label{main},
 	\end{align*}
 	then $A$ is $G$ $(2k-\order(A)+2)$-determined. Moreover, \eqref{main} holds 
	for some $k$ iff $\tilde T_A(GA)$ (equivalently $\tilde T^e_A(GA)$) is of finite $K-$codimension in $M_{m,n}$.
	\item If $\characteristic(K)= 0$ then the condition \eqref{main} for some $k$ is {\em equivalent} to $A$ being finitely $G$--determined.	\end{enumerate}
 \end{Theorem}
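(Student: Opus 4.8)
The plan is to handle separately the algebraic equivalence in the ``moreover'' of (1), the determinacy bound in (1), and the two directions of (2), organising everything around the homotopy (path) method for the bound and around smoothness of orbits in a jet space for the converse in (2).

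\emph{The equivalences in the ``moreover'' of (1)} are characteristic-free commutative algebra. Both $\tilde T_A(GA)$ and $\tilde T^e_A(GA)$ are $R$-submodules of $M_{m,n}$ and $\tilde T^e_A(GA)=\tilde T_A(GA)+\langle\partial A/\partial x_\nu\rangle$; since $\mathfrak m\langle\partial A/\partial x_\nu\rangle\subseteq\tilde T_A(GA)$, the quotient $\tilde T^e_A(GA)/\tilde T_A(GA)$ is spanned over $K$ by $s$ elements, so one of the two modules has finite $K$-codimension in $M_{m,n}$ iff the other does. If \eqref{main} holds then $\mathfrak m^{k+2}M_{m,n}\subseteq\tilde T_A(GA)$, so $M_{m,n}/\tilde T_A(GA)$ is a quotient of the finite-dimensional $K$-space $M_{m,n}/\mathfrak m^{k+2}M_{m,n}$; conversely, if $M_{m,n}/\tilde T_A(GA)$ is finite-dimensional over $K$ it has finite length as an $R$-module, hence $\mathfrak m^N M_{m,n}\subseteq\tilde T_A(GA)$ for some $N$ and \eqref{main} holds with $k=N-1$.

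\emph{The bound in (1).} Put $d:=\order(A)$ and $k':=2k-d+2$, let $B-A\in\mathfrak m^{k'+1}M_{m,n}$, set $\Phi:=B-A$, and consider the path $A_t:=A+t\Phi$ over the complete local ring $R[[t]]$. I would construct $g_t\in G$ (with entries in $R[[t]]$) with $g_0=\mathrm{id}$ and $g_t\cdot A_t=A$, and specialise $t=1$ to obtain $B\mathop\sim\limits^{G}A$. Differentiating $g_t\cdot A_t=A$ in $t$ turns this into an infinitesimal equation demanding that $\frac{d}{dt}A_t=\Phi$ lie in the tangent image of the orbit through $A_t$, taken over $R[[t]]$. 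Two steps remain. First, a stability lemma: \eqref{main} propagates from $A$ to every $A_t$, uniformly in $t$. Here one observes that the generators $\mathfrak m E_{pq}A_t$, $\mathfrak m A_tE_{hl}$, $\mathfrak m^2\partial A_t/\partial x_\nu$ of $\mathfrak m\tilde T_{A_t}(GA_t)$ differ from those at $A$ by elements of $\mathfrak m^{k'+2}M_{m,n}$ (one order is lost under $\partial/\partial x_\nu$, but the extra $\mathfrak m$ compensates); since \eqref{main} forces $k\ge d-1$, hence $k'+2\ge k+3$, these corrections lie in $\mathfrak m\cdot\mathfrak m^{k+2}M_{m,n}$, and Nakayama over $R[[t]]$ promotes \eqref{main} for $A$ to \eqref{main} for $A_t$. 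Second, granted this, the infinitesimal equation is solved order by order in the $(\mathfrak m,t)$-adic filtration and the solution assembled by passage to the limit, which is legitimate since $R[[t]]$ is $(\mathfrak m,t)$-adically complete; this replaces the classical step of integrating a vector field, and therefore goes through in arbitrary characteristic. The sharp constant $2k-d+2$ drops out of the order bookkeeping --- the order lost under differentiation, the extra $\mathfrak m$ in \eqref{main}, and the quadratic loss coming from the non-linear remainder of the $G$-action --- and I expect the stability lemma with this exact constant to be the main technical obstacle.

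\emph{Part (2).} ``\eqref{main} for some $k$ $\Rightarrow$ $A$ finitely $G$-determined'' is exactly (1). For the converse, assume $A$ is $G$ $k'$-determined; it suffices to show $\tilde T_A(GA)$ has finite $K$-codimension, as then \eqref{main} follows from the first step. Fix $\ell\ge k'+1$ and pass to the finite-dimensional jet space $J_\ell:=M_{m,n}/\mathfrak m^{\ell+1}M_{m,n}$. An element of $G$ congruent to $\mathrm{id}$ to sufficiently high order acts trivially on $J_\ell$, so the (linear) $G$-action on $J_\ell$ descends to an algebraic action of a linear algebraic group $G_\ell$, and by \cite[Proposition 2.5]{GP16} the image $\pi_\ell(\tilde T_A(GA))$ in $J_\ell$ is the image of the differential at $\mathrm{id}$ of the orbit map $G_\ell\to J_\ell$. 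Now $\characteristic(K)=0$ enters: the orbit $G_\ell\cdot\bar A$, where $\bar A:=\pi_\ell(A)$, is a smooth locally closed subvariety of $J_\ell$ whose tangent space at $\bar A$ equals $\pi_\ell(\tilde T_A(GA))$. Since $A$ is $k'$-determined, every $B\in A+\mathfrak m^{k'+1}M_{m,n}$ lies in $GA$, so $G_\ell\cdot\bar A$ contains the affine subspace $\bar A+\pi_\ell(\mathfrak m^{k'+1}M_{m,n})$; smoothness then forces $\pi_\ell(\mathfrak m^{k'+1}M_{m,n})\subseteq\pi_\ell(\tilde T_A(GA))$, that is, $\mathfrak m^{k'+1}M_{m,n}\subseteq\tilde T_A(GA)+\mathfrak m^{\ell+1}M_{m,n}$. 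As $\mathfrak m^{\ell+1}M_{m,n}\subseteq\mathfrak m\cdot\mathfrak m^{k'+1}M_{m,n}$ because $\ell\ge k'+1$, Nakayama applied to the finitely generated $R$-module $\mathfrak m^{k'+1}M_{m,n}/(\mathfrak m^{k'+1}M_{m,n}\cap\tilde T_A(GA))$ gives $\mathfrak m^{k'+1}M_{m,n}\subseteq\tilde T_A(GA)$, so $\tilde T_A(GA)$ has finite $K$-codimension. The crucial, and only characteristic-sensitive, ingredient is the smoothness of the orbit; this fails in positive characteristic, which is precisely why the equivalence there demands entirely different arguments.
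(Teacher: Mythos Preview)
This theorem is not proved in the present paper: it is quoted verbatim from \cite[Theorem 3.2]{GP16} and used as a black box (see e.g.\ the proofs of Theorem \ref{column matrix} and Corollary \ref{main corollary}). There is therefore no ``paper's own proof'' to compare your proposal against.

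That said, the strategy you outline is the standard one and is essentially what is carried out in \cite{GP16}: the ``moreover'' is Nakayama-type commutative algebra; the determinacy bound comes from a stability lemma (condition \eqref{main} is $G$-invariant and persists under perturbations of sufficiently high order) followed by an order-by-order construction over $R[[t]]$ replacing the classical flow-integration step; and the converse in characteristic $0$ goes via the jet-space orbit argument, where smoothness of orbits of algebraic groups in characteristic $0$ identifies the Zariski tangent space of the orbit with the image of the differential of the orbit map. One small remark on your phrasing in (2): what you need is that the \emph{tangent space to the orbit} at $\bar A$ equals $\pi_\ell(\tilde T_A(GA))$; \cite[Proposition 2.5]{GP16} gives you only that the latter is the \emph{image of the differential} of the orbit map, and it is precisely the characteristic-$0$ hypothesis (separability, hence smoothness of the orbit map onto its image) that makes these two coincide --- you say this at the end, but it should be invoked at the point where you pass from ``orbit contains affine subspace'' to ``tangent image contains linear subspace''.
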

  
 \begin{Remark} \rm 
 	Condition \eqref{main} gives a {\em sufficient} criterion for finite determinacy of $A$ in any characteristic. We do not know whether, for arbitrary $m$ and $n$,  the finite codimension of the tangent image $\tilde T_A(GA)$ in $M_{m,n}$ is necessary in positive characteristic. On the other hand, the finite codimension of the tangent space $T_A(GA) \subset M_{m,n}$ is easily seen to be necessary in any characteristic (cf. \cite [Lemma 2.11] {GP16}). 
If $\characteristic(K)= 0$ then  $\tilde T_A(GA)$ coincides $T_A(GA)$ (\cite[Lemma 2.8] {GP16}) and hence condition \eqref{main} for some $k$ is necessary and sufficient.

In general we have $\tilde T_A(GA)\subset T_A(GA)$, and it is important to notice that the inclusion can be strict in positive characteristic (see Example 2.9 in \cite{GP16}). 
 \end{Remark}
 
\begin {Conjecture} \rm 
Statement {\it 2.} of Theorem  \ref{GP16} holds also in positive characteristic.
 \end {Conjecture}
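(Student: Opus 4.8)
\medskip
\noindent\emph{Proof strategy for the conjecture.}
The plan is to reduce the conjecture to a single local claim and then attack that claim with the Fitting-ideal machinery of this paper. By part~(1) of Theorem~\ref{GP16}, condition~\eqref{main} holds for some $k$ if and only if $\tilde T_A(GA)$ has finite $K$-codimension in $M_{m,n}$, and this already forces finite $G$-determinacy in any characteristic; moreover in characteristic $0$ one has $\tilde T_A(GA)=T_A(GA)$, so part~(2) is automatic there. Hence the conjecture is equivalent to the implication: if $A$ is finitely $G$-determined then $\dim_K\bigl(M_{m,n}/\tilde T_A(GA)\bigr)<\infty$. For a finitely determined $A$ we have two facts at our disposal --- (i) the honest tangent space $T_A(GA)$, which (unlike $\tilde T_A(GA)$) also records the ``inseparable'' orbit directions invisible to the partials $\partial A/\partial x_\nu$, has finite $K$-codimension by Lemma~2.11 of \cite{GP16}; and (ii) all Fitting ideals $\Fitt_j(A)$ have maximal height, by the main theorem of this paper --- and the aim is to show that together these already yield finite $K$-codimension of the smaller module $M_{m,n}/\tilde T_A(GA)$.

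The module $M_{m,n}/\tilde T^e_A(GA)$ is finitely generated over $R$, so it has finite $K$-length iff its support is $\subseteq\{\mathfrak m\}$; once this is known, Nakayama together with the Krull-intersection/Artin--Rees arguments of \cite{GP16} promote it to \eqref{main} for an explicit $k$. So it suffices to prove $\bigl(M_{m,n}/\tilde T^e_A(GA)\bigr)_\mathfrak p=0$ for every prime $\mathfrak p\neq\mathfrak m$. Fix such a $\mathfrak p$ and localise. The piece $\langle E_{m,pq}A\rangle+\langle A\,E_{n,hl}\rangle$ is controlled by the ideals of minors of $A$, i.e. by the Fitting ideals; maximal height of every $\Fitt_j(A)$ means exactly that, after left--right equivalence over $R_\mathfrak p$, the localised matrix $A_\mathfrak p$ is a generic determinantal normal form, for which this piece has the expected cokernel. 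What then remains is the purely local ``smoothness'' statement that at such a normal form the derivative part $\langle\partial A/\partial x_\nu\rangle$ already fills all remaining orbit directions modulo $\langle E_{m,pq}A\rangle+\langle A\,E_{n,hl}\rangle$ --- equivalently, that the orbit map $G\to GA$ is submersive, not merely dominant, at $A_\mathfrak p$ --- so that $\bigl(\tilde T^e_A(GA)\bigr)_\mathfrak p=\bigl(T_A(GA)\bigr)_\mathfrak p=(M_{m,n})_\mathfrak p$ by (i).

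This last local claim is where I expect the real obstruction to sit: that maximal height of the Fitting ideals forces the partial derivatives, rather than merely the abstract tangent space, to generate the normal directions away from $\mathfrak m$. In characteristic $0$ it is the Jacobian criterion and is immediate; in characteristic $p$ the warning of the abstract --- ``the tangent space to the orbit cannot be described in the classical way'' --- is precisely that an inseparable direction can persist at a point where the associated determinantal stratum is perfectly regular, and over a non-perfect $K$ a regular local ring need not even be smooth over $K$, so regularity of the stratum is not enough to invoke. Overcoming this requires a positive-characteristic substitute for the Jacobian criterion tailored to the left--right group, together with a replacement for the classical (Krull) specialisation of power series, which the abstract already flags as failing, in order to descend from the generic determinantal normal form to an arbitrary prime $\mathfrak p$. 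A natural first case is $n=1$: there \eqref{main} is equivalent to $R/I$ being smooth over $K$ outside $\mathfrak m$, and the characterisation theorem of this paper already identifies the finitely determined ideals with the isolated complete intersection singularities, so the conjecture for ideals should follow by comparing these two conditions; lifting the argument from $1$-column matrices to general $m\times n$ matrices, via the same Fitting-ideal-and-special-construction strategy used there, is the route I would take.
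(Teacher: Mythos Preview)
First, a point of orientation: the paper does \emph{not} prove this conjecture in full generality --- it remains open for arbitrary $m,n$. What the paper establishes is the special case $n=1$ (Theorem~\ref{column matrix}), and the method there is quite different from your localization outline.

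Your strategy has a genuine gap exactly where you flag it. The claim that maximal height of the Fitting ideals forces $\bigl(\tilde T^e_A(GA)\bigr)_{\mathfrak p}=(M_{m,n})_{\mathfrak p}$ for every $\mathfrak p\neq\mathfrak m$ is not established, and the positive-characteristic obstacle you describe --- inseparable orbit directions surviving at regular points of the determinantal stratum --- is precisely why the paper does not take this route even for $n=1$. No substitute Jacobian criterion tailored to the left--right group is supplied, and the paper offers none.

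For $n=1$ the paper argues by deformation and semicontinuity instead. Given a finitely $G$-determined $A\in M_{m,1}$ with $m<s$, one constructs an explicit matrix $B$ with entries $g_i=\sum_j c_{ij}x_j^N$ where $p\nmid N$ and the $c_{ij}$ are chosen so that all maximal minors of $[c_{ij}]$ are nonzero; a direct calculation (Proposition~\ref{example isolated}) shows $\dim_K\bigl(M_{m,1}/\tilde T^e_B(GB)\bigr)<\infty$. One then forms the one-parameter family $B_t=B+tA$ and invokes a semicontinuity result for modules over $K[t][[{\bf x}]]$ (Proposition~\ref{semi-continuity}) to obtain finite codimension for $B_{t_0}$ with $t_0\neq 0$ near $0$. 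Since $N$ exceeds the determinacy degree of $A$, one has $B_{t_0}\mathop\sim\limits^G t_0A\mathop\sim\limits^G A$, and the codimension transfers. The case $m\ge s$ is handled directly by Theorem~\ref{height}.\ref{height 2}(ii). The paper explicitly notes that the required ``special deformation'' is unknown for arbitrary $m,n$, which is why the full conjecture remains open; your localization approach would, if it worked, bypass this obstruction, but the missing local step is the whole difficulty.

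Finally, your proposed shortcut for $n=1$ --- reading off the conjecture from the ICIS characterization (Theorem~\ref{theo1.5}) --- is circular: that characterization is \emph{derived from} Theorem~\ref{column matrix}, not proved independently of it.
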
 
 
One of the main aims of this paper is to prove the conjecture for
matrices in $M_{m,1}$ (c.f. Theorem \ref{column matrix}, where we give also determinacy bounds). 
\begin{Theorem} \label{th1.2}
{\it	For $A \in  \mathfrak{m} \cdot Mat(m,1,R)$ with $K$ infinite, the following are equivalent:
	\begin{enumerate}
		\item $A$ is finitely left--right determined.
		\item $\dim_K\left(M_{m,1}\big/\tilde T_A(GA)\right) <\infty.$
	\end{enumerate} }
%	Moreover, if condition 2. is satisfied then $A$ is $(2d_e-\order(A)+2)$ right--left determined.}  In particular, finite determinacy is an open property, i.e. it is preserved under deformation of the entries of $A$.
$K$ infinite is only needed for (1) $\Rightarrow$ (2) and for $m<s$ (= number of variables).
\end{Theorem}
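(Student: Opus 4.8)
The implication (2) $\Rightarrow$ (1) is already contained in Theorem \ref{GP16}(1): if $\dim_K(M_{m,1}/\tilde T_A(GA))<\infty$, then by the last sentence of that theorem condition \eqref{main} holds for some $k$, hence $A$ is finitely $G$-determined (with an explicit bound). So the real content is (1) $\Rightarrow$ (2), and this is where $K$ infinite and the hypothesis will be used. The plan is to argue by contraposition: assuming $\dim_K(M_{m,1}/\tilde T_A(GA))=\infty$, I want to produce, for every $k$, a matrix $B$ with $B-A\in\mathfrak{m}^{k+1}M_{m,1}$ that is \emph{not} left--right equivalent to $A$, thereby showing $A$ is not finitely $G$-determined. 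The natural invariant to separate orbits is the Fitting ideal $\Fitt_0(\coker A)=\langle a_1,\ldots,a_m\rangle\subset R$ (for a column matrix, the only nonunit Fitting ideal), which is a left--right invariant since $R/\langle a_1,\ldots,a_m\rangle$ is the contact-equivalence class of the ideal; more precisely $G$-equivalent column matrices have isomorphic cokernels and hence define contact-equivalent ideals.

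The key algebraic step is to relate infinite codimension of the tangent image to the Fitting ideal not having maximal height. Concretely: if $\langle a_1,\ldots,a_m\rangle$ has height $s$ (maximal), then $R/\langle a_1,\ldots,a_m\rangle$ is Artinian and one can show directly that $\tilde T_A(GA)\supset \langle a_1,\ldots,a_m\rangle\cdot M_{m,1}$ has finite codimension (the pieces $\langle E_{m,pq}A\rangle$ already contribute every $a_i$ in every row, and $\mathfrak m\langle\partial A/\partial x_\nu\rangle$ supplies the rest modulo a high power of $\mathfrak m$ — this is essentially the statement, quoted in the abstract, that a finitely determined matrix has Fitting ideals of maximal height, applied here in the converse direction for one column). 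So the contrapositive hypothesis $\dim_K(M_{m,1}/\tilde T_A(GA))=\infty$ forces $\height\langle a_1,\ldots,a_m\rangle\le s-1$; equivalently $V(a_1,\ldots,a_m)\subset\spectrum R$ is positive-dimensional, i.e. the ideal $I=\langle a_1,\ldots,a_m\rangle$ does not define an isolated singularity (here $\dim(R/I)>0$ is automatic, not assumed).

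Now comes the geometric/deformation step, which I expect to be the main obstacle, and where $K$ infinite enters. Given that $V(I)$ has a positive-dimensional component, I want to perturb $A$ inside $\mathfrak m^{k+1}M_{m,1}$ so as to change the \emph{analytic isomorphism type} of $R/I$ — e.g. change a $\delta$-type invariant, the Hilbert--Samuel function, or the dimension/multiplicity of the singular locus of $V(I)$ — by an amount that cannot be undone by a coordinate change and row operations. One clean way: choose a generic (this is where infinitely many choices, hence $K$ infinite, is needed) $K$-linear combination / generic linear form $\ell$ and add $\ell^{N}$ (for large $N$, so $N\ge k+1$) to one of the entries $a_i$ in a direction transverse to the positive-dimensional part of $V(I)$; a Bertini/generic-flatness type argument shows that for generic choices this strictly decreases $\dim(R/I)$ or strictly changes the Milnor/Tjurina-type number, producing infinitely many pairwise non-$G$-equivalent $B_k$, one for each $k$. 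The case $m<s$ is the delicate sub-case flagged in the statement: when there are fewer generators than variables, $I$ can never have maximal height, so $A$ is automatically of infinite codimension, and one must directly exhibit the non-equivalent perturbations; here the genericity (infinite field) is essential to choose the perturbing forms in general position with respect to $V(I)$. I would organize this as: (i) quote/prove the maximal-height criterion for finite codimension of $\tilde T_A(GA)$ for column matrices; (ii) when height is not maximal, realize $I$ as (the specialization of) a family over a positive-dimensional base and use a Bertini-type genericity argument over the infinite field $K$ to build the $B_k$; (iii) conclude via the Fitting-ideal (contact) invariant that $B_k\not\sim^{G}A$, so \eqref{main} fails for all $k$ and $A$ is not finitely $G$-determined. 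The subtlety about specialization of power series mentioned in the abstract will presumably have to be confronted in step (ii).
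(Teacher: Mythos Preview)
Your direction $(2)\Rightarrow(1)$ is fine and matches the paper. The problem is your strategy for $(1)\Rightarrow(2)$.

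You reduce ``$\dim_K(M_{m,1}/\tilde T_A(GA))=\infty$'' to ``$\height I_1(A)<s$'' and then try to show the latter forces non-finite-determinacy. This reduction throws away too much. When $m<s$ the ideal $I_1(A)$ can have height at most $m<s$, yet $A$ can be finitely $G$-determined with $\tilde T_A(GA)$ of finite codimension: any isolated complete intersection singularity gives such an $A$ (e.g.\ $A=[x_1]^T$ in $K[[x_1,x_2]]$ has $\tilde T^e_A(GA)=R$). So your sentence ``when there are fewer generators than variables, $I$ can never have maximal height, so $A$ is automatically of infinite codimension'' is false, and with it the whole contrapositive scheme collapses: from $\height I_1(A)<s$ alone you simply cannot build perturbations $B_k\not\sim^G A$, because for an ICIS no such $B_k$ exist. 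The correct algebraic translation of finite codimension for $m\le s$ is not ``$I_1(A)$ is $\mathfrak m$-primary'' but ``$I_1(A)+I_m(Jac(A))$ is $\mathfrak m$-primary'' (Lemma \ref{isolated and presentation matrix}), and your argument never engages with the Jacobian minors.

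The paper does \emph{not} argue by contraposition. For $m<s$ it proceeds directly: assuming $A$ is $G$ $k$-determined, it first replaces $A$ by its $k$-jet (polynomial entries), then picks an auxiliary matrix $B=[g_1\ \ldots\ g_m]^T$ with $g_i=\sum_j c_{ij}x_j^N$, $N>k$, where the $c_{ij}$ are chosen (using $K$ infinite) so that every maximal minor of $[c_{ij}]$ is nonzero; this $B$ is itself finitely $G$-determined with $\tilde T^e_B(GB)$ of finite codimension (Proposition \ref{example isolated}). One then forms the one-parameter family $B_t=B+tA$ and invokes a semicontinuity result (Proposition \ref{semi-continuity}) for the ideal $Q_t=I_1(B_t)+I_m(Jac(B_t))$ to get $\dim_K R/Q_{t_0}<\infty$ for $t_0$ near $0$. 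Finally, for $t_0\ne 0$ one has $B_{t_0}\sim^G t_0A\sim^G A$ because $B\in\mathfrak m^{N}M_{m,1}$ and $A$ is $k$-determined, so the finite codimension transfers to $A$. The case $m\ge s$ is handled separately via Theorem \ref{height}.\ref{height 2}(ii), which shows $I_1(A)$ is $\mathfrak m$-primary --- this part of your sketch is essentially right. The ``specialization of power series'' issue you anticipated is circumvented precisely by passing to the polynomial $k$-jet first and working with $K[t][[\mathbf x]]$, not $K(\mathbf u)[[\mathbf x]]$.
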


%\begin{Remark} \rm
 Theorem \ref{th1.2} was proved by Mather in \cite[Theorem 3.5] {Mat68} for real and complex analytic maps and  in \cite{CS97} in a slightly more general setting for char($K$) = 0, using classical methods.
The proof  of the interesting direction {\em 1.} $\Rightarrow$  {\em 2.} for $\characteristic(K)> 0$ resisted previous attempts and is more involved as one might think. It requires a special deformation (unknown to us for arbitrary $m,n$) and the above mentioned result about Fitting ideals. \\
%\end{Remark} 

Consider now the ideals  $I$ resp. $J$ generated by the entries of $A$ resp. $B \in M_{m,1}$. Then $A$ is left--right equivalent to $B$ iff $I$ and $J$ are {\it contact equivalent} (c.f.  Proposition \ref{contact equivalence}).
Recall that $R/I$ is called a {\it complete intersection} if $\dim(R/I) = s - \mng(I)$ with $\mng(I)$ the minimal number of generators of $I \subset R$. The complete intersection $R/I$  is called an {\it isolated complete intersection singularity (ICIS)} if the ideal of the singular locus,
$I+I_m\left(\left[\frac{\partial f_i}{\partial x_j}\right]\right)$, contains a power of the maximal ideal, where $\{f_1,\ldots, f_m\}$ is a minimal set of generators of $I$ and $I_t(A)$ denotes the ideal of $t\times t$ minors of the matrix $A$.
Theorem  \ref{th1.2}  implies  
%\medskip
%\vskip 7pt
\begin {Theorem} {\label{theo1.5}}
	Let $I$ be a proper ideal of $R$. 
	\begin {enumerate}
	\item  If $\dim(R/I) = 0$ then $I$ is finitely contact determined.
	\item  If  $\dim(R/I) >0$ and $K$ is infinite, then $I$ is finitely contact determined if and only if $R/I$ is an isolated complete intersection singularity.
	\end {enumerate} 
\end {Theorem}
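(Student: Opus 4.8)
The plan is to deduce everything from Theorem \ref{th1.2} via the dictionary between $1$-column matrices and ideals provided by Proposition \ref{contact equivalence}: for $A\in M_{m,1}$ with entry ideal $I=\langle a_1,\dots,a_m\rangle$, left--right equivalence of matrices corresponds exactly to contact equivalence of the generated ideals, and finite left--right determinacy of $A$ (for $A$ a \emph{minimal} generating column, i.e. $m=\mng(I)$) corresponds to finite contact determinacy of $I$. So the content of the proof is to translate the algebraic condition $\dim_K\bigl(M_{m,1}/\tilde T_A(GA)\bigr)<\infty$ into the geometric condition ``$R/I$ is a $0$-dimensional ring, or an ICIS''. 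For part (1), when $\dim(R/I)=0$ the quotient $R/I$ is Artinian, so $\mathfrak m^N\subset I$ for some $N$; one checks directly that then $\mathfrak m^{N}\cdot M_{m,1}\subset \tilde T_A(GA)$ (the summand $\langle E_{m,pq}\cdot A\rangle$ already contributes all columns with entries in $I$), hence the codimension is finite and Theorem \ref{th1.2} applies — and this needs no hypothesis on $K$.

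For part (2), assume $\dim(R/I)>0$ and $K$ infinite. First I would record the elementary computation of $\tilde T_A(GA)$ for a column matrix: $\langle E_{m,pq}\cdot A\rangle$ is the submodule of columns all of whose entries lie in $I$, while $\langle A\cdot E_{1,11}\rangle$ adds back the column $A$ itself, and $\mathfrak m\cdot\langle\partial A/\partial x_\nu\rangle$ contributes columns whose entries lie in $\mathfrak m\cdot\langle\partial a_i/\partial x_\nu\rangle$. Taking the quotient $M_{m,1}/\tilde T_A(GA)$ and looking at its support, finite $K$-codimension forces the support to be $\{\mathfrak m\}$, which translates into: $I$ together with the ideal of partial derivatives of the $a_i$ (this is where the Jacobian enters, since $m=\mng(I)$) contains a power of $\mathfrak m$ away from $V(I)$, and in fact the cleanest route is via the Fitting-ideal result advertised in the abstract and in Theorem \ref{th1.2}'s proof — a finitely determined $A\in M_{m,1}$ has $\operatorname{Fitt}_0(\operatorname{coker}A)=I$ of maximal height, i.e. $\height(I)=m=\mng(I)$, which is exactly the statement that $R/I$ is a complete intersection. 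Granting the complete-intersection property, one then shows that finite codimension of $M_{m,1}/\tilde T_A(GA)$ is equivalent to $V\bigl(I+I_m([\partial f_i/\partial x_j])\bigr)\subset\{\mathfrak m\}$, i.e. to the ICIS condition; here the ``$\Leftarrow$'' direction is the classical Jacobian criterion (for a complete intersection the singular locus is cut out by $I$ plus the maximal minors of the Jacobian), and the ``$\Rightarrow$'' direction uses that if $R/I$ were a complete intersection with non-isolated singularity then the singular locus has positive dimension and one produces an infinite-dimensional family inside $M_{m,1}/\tilde T_A(GA)$.

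The main obstacle is the ``only if'' direction of (2), i.e. showing that finite contact determinacy forces $R/I$ to be a complete intersection in the first place — a priori $\mng(I)=m$ could be strictly larger than $\height(I)$. This is precisely the place where the Fitting-ideal theorem (``the Fitting ideals of a finitely determined matrix have maximal height'') does the work: applied to the $1$-column matrix $A$ it gives $\height\bigl(\operatorname{Fitt}_0(\operatorname{coker}A)\bigr)=\height(I)=m$, and since always $\height(I)\le\mng(I)=m$ and $\dim R/I=s-\height(I)$ for a height-$m$ ideal minimally generated by $m$ elements, we get exactly $\dim(R/I)=s-\mng(I)$, the complete-intersection condition. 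I expect that assembling this — the reduction to Theorem \ref{th1.2}, the explicit description of $\tilde T_A(GA)$ for columns, the Fitting-ideal input, and the Jacobian-criterion identification of the ICIS locus — is routine once those ingredients are in hand, with the infiniteness of $K$ entering only through its role in Theorem \ref{th1.2} (needed for the implication (1)$\Rightarrow$(2) and the case $m<s$), and all the genuinely new difficulty having been pushed into the proof of Theorem \ref{th1.2} and the Fitting-ideal statement.
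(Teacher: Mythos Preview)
Your proposal is correct and matches the paper's approach (carried out in the proof of Theorem \ref{geomchar}): translate via Proposition \ref{contact equivalence} to column matrices, use the Fitting-ideal result (Theorem \ref{height}) first to force $m<s$ from $\dim(R/I)>0$ and then to get $\height(I)=m$ (hence $R/I$ is a complete intersection), and identify the finite-codimension condition of Theorem \ref{th1.2} with the ICIS condition via the Jacobian criterion (Lemma \ref{isolated and presentation matrix} / Corollary \ref{main corollary}). One minor slip: for a column $A\in M_{m,1}$ the relevant Fitting ideal is $I_1(A)=I=\Fitt_{m-1}(\coker A)$, not $\Fitt_0$, but this does not affect your argument.
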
 	
	
See Theorem \ref{geomchar} for a more detailed statement which contains also the determinacy bound $(2\tau(I)-\order(I)+2)$, where $\tau(I)$ is the Tjurina number of $I$.

Finite determinacy of $I$ implies that $R/I$ is {\em algebraic}, i.e. $R/I \cong R/J$ where $J$ is generated by polynomials.  But it is much stronger since by a result of Artin (\cite[Theorem 3.8]{Ar69})  every isolated singularity $R/I$ (not necessarily an {\em ICIS}) is algebraic. 

\begin {Problem} \rm
The assumption that $K$ is infinite in 2. is only needed to show that a finitely determined $I$ defines an $ICIS$. It is due to our method of proof but we do not know whether it is necessary. For hypersufaces however we show in Theorem \ref{hypersurface} that it is not necessary. 
\end {Problem}

%\begin {Remark} {\label{r1.7}}\rm
%The fact that an $ICIS$ is finitely contact determined was first proved by Hironaka in \cite[Theorem C]{Hi65}. Moreover, he claims (\cite[Main Theorem I]{Hi65}) that any reduced, equidimensional isolated singularity is finitely contact determined. That this result might be problematic is indicated in \cite{LT08}, where the authors comment on that paper: ''... one can hope to prove for example that any isolated singularity is analytically or formally determined by a finite infinitesimal neighborhood of the singular point, and in particular is algebraic, but there are difficulties." In particular, it shows that the problem which ideals are finitely contact determined remained open. It is solved in Theorem \ref{theo1.5} and shows that \cite[Main Theorem I]{Hi65} is wrong (as well as a generalization in \cite{CS97}).
%\end {Remark}

To prove our results we derive a necessary condition for finite $G$-determinacy for matrices in section \ref{necessary}, Theorem \ref{height}, by showing that the Fitting ideals of a finitely $G$-determined matrix have maximal height. For this we use the specialization of ideals depending on parameters, which was introduced by W. Krull and then extended and systematically studied by D.V. Nhi and N.V. Trung for finitely generated modules over polynomial rings and localizations thereof (cf. \cite{NT99}, \cite{NT00}).

\begin {Problem} \rm
%It would be interesting to develop a satisfactory theory for specialization of ideals in power series rings depending on parameters and to see which properties continue to hold for a generic substitution of the parameters by field elements. 
A  satisfactory theory for specialization of ideals in power series rings depending on parameters has not yet been developed.
We show in Example \ref{osgood}  that a straightforward generalization of specialization from (localization of) polynomial rings to power series rings does not work.  In Remark \ref{specialization} we propose an approach which is reasonable for uncountable fields $K$ e.g. for $\R, \C$. For a concrete open problem see Problem \ref{pr2.6}.
\end {Problem}

In section \ref {special case} we study $G$--equivalence for 1-column matrices and use the results of section \ref{necessary} to prove Theorem \ref{th1.2}. 
We need and prove a semicontinuity result for modules over a power series ring depending on parameters (Proposition \ref{semi-continuity}) which should be well known, but for which we could not find a reference.
%\vskip 5pt
In section  \ref {complete intersections} we apply the results of  section \ref {special case} to contact equivalence for ideals and prove Theorem \ref{geomchar}. Finally we prove in Theorem \ref{hypersurface} that a power series $f \in K[[{\bf x}]]$ is finitely contact (resp. right) determined iff the Tjurina number (resp. the Milnor number) of $f$ is finite, also in positive characteristic.

% % % % % % % % % % % % % % % % % % % % % % % % % % % % % % % % % % % %
\section{A necessary finite determinacy criterion by Fitting ideals}\label{necessary}

In this section, we establish a necessary condition for finite $G$-determinacy of matrices in $M_{m,n}$.
Without loss of generality, we assume that $n\le m$. 

For a matrix $A\in Mat(m,n, P)$,  $P$ a commutative Noetherian ring, and an integer  $t$, let $I_t(A)$ denote the ideal of $P$ generated by all $t\times t$ minors of $A$, also called the ($m-t$)-th Fitting ideal of the cokernel of the map
$A: R^n \to R^m$.  

Let ${\bf u}= (u_1,\ldots, u_r)$ be a new set of indeterminates. For an ideal $I \subset K({\bf u})[{\bf x}]$ and $a\in K^r$  the ideal
$$I_a:=\ \{f(a,{\bf x})\ | \ f({\bf u},{\bf x})\in I\cap K[{\bf u}][{\bf x}] \},$$ 
is called the {\em specialization} of $I$  and for an ideal $J \subset K[{\bf u},{\bf x}]$, 
 $$J^e :=J\cdot K({\bf u})[{\bf x}]$$ 
 denotes the {\em extension} of $J$. 
We say that a property holds for {\em generic} $a \in K^r$ if there exists a non-empty Zariski open set $U\subset K^r$ such that the considered property holds for all $a\in U$. 

\begin{Lemma}\label{height of determinantal ideal}
	 Let $K$ be infinite and $M=[g_{ij}({\bf u},{\bf x})]\in Mat(m, n, K[{\bf u}, {\bf x}])$. For $a\in K^r$ set $M_a=[ g_{ij}(a,{\bf x})]\in Mat(m, n,K[{\bf x}])$.
%	 $M_a=[ g_{ij}(a,{\bf x})]\in Mat(m, n, \langle{\bf x}\rangle)$.  
Then for generic $a\in K^r$ and for all $t=1,\ldots, n$ the following holds:
\begin{enumerate}
\item [(i)] $I_t(M_a)=(I_t(M)^e)_a,$ 	
\item [(ii)]  if  $I_t(M_a)$ is a proper ideal then $I_t(M)^e$ is proper too,
 \item [(iii)] if $I_t(M_a)$ is proper then $\height \left(I_t(M_a)\right)=\height (I_t(M)^e), \,  \depth \left(I_t(M_a)\right)= \depth (I_t(M)^e).$
\end{enumerate}
\end{Lemma}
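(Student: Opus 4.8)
The plan is to reduce everything to the behaviour of a single fixed minor and to the classical specialization theory for ideals in the polynomial ring $K[\mathbf{u},\mathbf{x}]$, and then to transport the statements across the localization $K[\mathbf{u},\mathbf{x}] \to K(\mathbf{u})[\mathbf{x}]$ and the specialization map $\mathbf{u} \mapsto a$. First I would observe that for each fixed choice of row-indices $\rho$ and column-indices $\sigma$, the $t\times t$ minor $\delta_{\rho,\sigma}(\mathbf u,\mathbf x) = \det\big(M[\rho,\sigma]\big)$ lies in $K[\mathbf u,\mathbf x]$ and commutes with the substitution $\mathbf u \mapsto a$, i.e. $\delta_{\rho,\sigma}(a,\mathbf x) = \det\big(M_a[\rho,\sigma]\big)$. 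Hence $I_t(M)$ and $I_t(M_a)$ are the finitely many ideals generated by these minors, before and after substitution, and the ideal $I_t(M)$ is already defined over $K[\mathbf u,\mathbf x]$ (no need to pass to $K(\mathbf u)$ to write down its generators). This makes $I_t(M)^e = I_t(M) \cdot K(\mathbf u)[\mathbf x] = I_t\big(M\big)^e$ and puts us in the exact setting of the Nhi–Trung specialization results (\cite{NT99}, \cite{NT00}): for an ideal $\mathfrak a \subset K[\mathbf u,\mathbf x]$ one has, for generic $a$, $(\mathfrak a^e)_a = \mathfrak a_a$ (the specialization, in their sense, of the extension), together with preservation of being proper, of height, and of depth. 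So I would state (i) as the instance of their theorem applied to $\mathfrak a = I_t(M)$, noting $I_t(M)_a = I_t(M_a)$ directly from the minor-by-minor substitution above.

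Next, for (ii) I would argue contrapositively: if $I_t(M)^e$ is the unit ideal in $K(\mathbf u)[\mathbf x]$, then $1 = \sum_j h_j(\mathbf u,\mathbf x)\,\delta_j(\mathbf u,\mathbf x)$ with $h_j \in K(\mathbf u)[\mathbf x]$; clearing the (finitely many) denominators in $\mathbf u$ gives $d(\mathbf u) = \sum_j \tilde h_j(\mathbf u,\mathbf x)\,\delta_j(\mathbf u,\mathbf x)$ with $d \in K[\mathbf u]\setminus\{0\}$ and $\tilde h_j \in K[\mathbf u,\mathbf x]$, so for every $a$ with $d(a)\neq 0$ — a nonempty Zariski-open condition since $K$ is infinite — specializing yields $d(a) = \sum_j \tilde h_j(a,\mathbf x)\,\delta_j(a,\mathbf x) \in I_t(M_a)$, a unit, so $I_t(M_a)$ is the unit ideal. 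Taking the complement gives: on a nonempty open set, $I_t(M_a)$ proper $\Rightarrow$ $I_t(M)^e$ proper. (Alternatively, (ii) is already contained in the Nhi–Trung statement that specialization preserves properness generically, combined with (i).) For (iii), assuming $I_t(M_a)$ proper on a suitable open set, I would invoke the height- and depth-preservation part of the Nhi–Trung theorem for $\mathfrak a = I_t(M)$: for generic $a$, $\height(\mathfrak a_a) = \height(\mathfrak a^e)$ and $\depth(\mathfrak a_a) = \depth(\mathfrak a^e)$, and then rewrite $\mathfrak a_a = I_t(M_a)$ and $\mathfrak a^e = I_t(M)^e$ using (i).

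Finally I would assemble the three statements into a single generic open set: each of (i), (ii), (iii) holds on its own nonempty Zariski-open subset of $K^r$, and since there are only finitely many values of $t \in \{1,\dots,n\}$ and only finitely many minors, the intersection over all $t$ of all these open sets is again nonempty Zariski-open (using $K$ infinite), and on it all assertions hold simultaneously. The main obstacle I anticipate is not any of the three items individually — once the problem is phrased over $K[\mathbf u,\mathbf x]$ they are essentially citations — but rather the bookkeeping needed to be sure that the Nhi–Trung framework applies verbatim: their "specialization" $\mathfrak a_a$ is defined as $\{f(a,\mathbf x) : f \in \mathfrak a \cap K[\mathbf u][\mathbf x]\}$ and one must check that for the specific ideal $\mathfrak a = I_t(M)$, generated by polynomials in $\mathbf u$ and $\mathbf x$, this coincides on the nose with $I_t(M_a)$ (generically), rather than merely containing or being contained in it; this is where the genericity of $a$ is genuinely used, and it is the point I would write out most carefully. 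I expect the height/depth claim to rest on the flatness of $K[\mathbf u,\mathbf x] \to K[\mathbf x]$ through the generic fiber plus their results on how associated primes and regular sequences specialize, so I would double-check that the hypotheses of those results (Noetherian, the relevant minors forming a "stable" configuration for generic $a$) are met.
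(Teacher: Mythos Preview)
Your proposal is correct and follows essentially the same route as the paper. The paper proves (i) directly by the ``bookkeeping'' argument you single out at the end: the minors $d_j^{(t)}(\mathbf u,\mathbf x)$ already lie in $K[\mathbf u,\mathbf x]$, and for each generator $f$ of $I_t(M)^e\cap K[\mathbf u,\mathbf x]$ there is a nonzero $b(\mathbf u)$ with $b\cdot f\in I_t(M)$, so avoiding the zeros of these finitely many $b$'s gives $(I_t(M)^e)_a=\langle d_j^{(t)}(a,\mathbf x)\rangle=I_t(M_a)$; for (ii) the paper uses (i) directly (your parenthetical alternative), and (iii) is cited from \cite[Theorem~3.4(ii), Corollary~4.4]{NT99} exactly as you suggest.
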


\begin{proof}
$(i)$	For all $a\in K^r$, $I_t(M_a)$ is the ideal of $K[{\bf x}]$ generated by $d_1^{(t)}(a, {\bf x}),\ldots, d_{l_t}^{(t)}(a, {\bf x})$, where $d_1^{(t)}({\bf u}, {\bf x}), \ldots,  d_{l_t}^{(t)}({\bf u}, {\bf x})$ are the $t\times t$ minors of $M$. On the other hand, for each generator $f^{(t)}({\bf u},{\bf x})$ of the ideal $I_t(M)^e\cap K[{\bf u},{\bf x}]$ of $K[{\bf u},{\bf x}]$, there is a polynomial $b^{(t)}({\bf u})\in K[{\bf u}]\setminus \{0\}$ such that $b^{(t)}\cdot f^{(t)}\in I_t(M)$. Therefore, for $a\in K^r$,  which is not a zero of any $b^{(t)}({\bf u})$, we have that $(I_t(M)^e)_a$ is generated by $d_1^{(t)}(a, {\bf x}), \ldots,  d_{l_t}^{(t)}(a, {\bf x})$. Hence, the first assertion holds. \\
$(ii)$	 If  $I_t(M)^e$ is not proper then $K[{\bf x}]=(I_t(M)^e)_a= I_t(M_a)$ by 
$(i)$, contradicting the assumption.\\
$(iii)$ The other statements follow from $(i), (ii)$ and \cite[Theorem 3.4 (ii) and Corollary 4.4]{NT99}.	
\end{proof}

Specialization and extension is also used in the proof of the following theorem which shows that we can modify a matrix with polynomial entries by adding polynomials of arbitrary high order such that the Fitting ideals of the modified matrix have maximal height. The proof was communicated to the authors by  Ng{\^o}~Vi\d{\^e}t Trung in \cite{Tru15} for $t=n$. Using his arguments, we prove for arbitrary $t$:

\begin{Theorem}\label{generic determinantal ideals}
	Let $A=[f_{ij}]\in Mat(m, n, K[{\bf x}])$, $f_{ij}\in \langle x_1,\ldots, x_s\rangle\cdot K[{\bf x}]$ with $K$ infinite. For $N\ge 1$ let  $B=[g_{ij}]\in Mat(m, n, K[{\bf x}])$ be the matrix with entries of the form
	$$g_{ij}=\sum\limits_{k=1}^s a_{ijk}x_k^N, \ a_{ijk}\in K.$$
	Then, for $t\in \{1,\ldots, n\}$ and generic $(a_{ijk})$ in $K^{mns}$, we have with  $m_t:=(m-t+1)(n-t+1)$
	
	$$\height (I_t(A+B))=\min\{s,m_t\}.$$
	If $m_t < s$ then $K[{\bf x}] / I_t(A+B)$ is Cohen-Macaulay and moreover, if $K$ is algebraically closed, then $I_t(A+B)$ is a prime ideal. 
\end{Theorem}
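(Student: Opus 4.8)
The plan is to realize the generic matrix $A+B$ as a specialization of a matrix with entries in a polynomial ring over $K(\mathbf{u})$, where $\mathbf u$ collects the coefficients $a_{ijk}$, and then transfer a height computation over the generic fiber down to a generic closed point by Lemma \ref{height of determinantal ideal}. Concretely, introduce indeterminates $u_{ijk}$ ($1\le i\le m$, $1\le j\le n$, $1\le k\le s$), set $r=mns$, and let $M=[f_{ij}+\sum_k u_{ijk}x_k^N]\in Mat(m,n,K[\mathbf u,\mathbf x])$. Then for $a=(a_{ijk})\in K^r$ we have $M_a=A+B$, and by Lemma \ref{height of determinantal ideal} it suffices to prove $\height(I_t(M)^e)=\min\{s,m_t\}$ over the ring $K(\mathbf u)[\mathbf x]$ (and that this ideal is proper, which is clear since all entries lie in $\langle x_1,\dots,x_s\rangle$). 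The Cohen–Macaulay and primeness assertions will likewise descend: by \cite[Corollary 4.4]{NT99} depth is preserved under generic specialization, so $K[\mathbf x]/I_t(A+B)$ is Cohen–Macaulay once $K(\mathbf u)[\mathbf x]/I_t(M)^e$ is; and for primeness (over $\overline K$) one uses that the generic fiber of an irreducible scheme has irreducible generic fiber and that irreducibility spreads out to a nonempty open set of closed points, after base change to $\overline K$.

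The heart of the matter is therefore the height computation for the \emph{generic} determinantal ideal $I_t(M)^e$ over $L:=K(\mathbf u)$. First I would observe that the upper bound $\height(I_t(M))\le m_t$ is the classical Eagon–Northcott bound on the codimension of determinantal ideals (valid for any $m\times n$ matrix over a Noetherian ring, here $L[\mathbf x]$), and of course $\height\le s=\dim L[\mathbf x]$, giving $\height(I_t(M)^e)\le\min\{s,m_t\}$ for free. The real work is the reverse inequality. The key idea, following Trung's communication, is to exploit the genericity of the perturbation $\sum_k u_{ijk}x_k^N$: after a change of coordinates in $\mathbf x$ one reduces to checking that the variety $V(I_t(M))\subset \mathbb{A}^s_{\overline{L}}$ has dimension $\le\max\{0,s-m_t\}$, and this can be done by a dimension count on an incidence variety. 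Specifically, I would parametrize pairs $(\text{point }\xi\in\mathbb A^s,\ \text{coefficient data } u)$ such that the specialized matrix has rank $<t$ at $\xi$, i.e. $u$ lies in a determinantal locus in the affine space of coefficients; for fixed generic $\xi$ with all $x_k(\xi)\ne 0$ the perturbed matrix $[f_{ij}(\xi)+\sum_k u_{ijk}x_k(\xi)^N]$ is, as a function of the $u_{ijk}$, a \emph{generic} matrix (an affine-linear surjection onto $Mat(m,n,\overline L)$), so the condition $\rank<t$ cuts out a subvariety of the $u$-space of codimension exactly $m_t$. Projecting the incidence variety to the $u$-space and taking the fiber over our generic $u=a$ then forces $\dim V(I_t(M_a)\cap\{x_k\ne 0\})\le s-m_t$; handling the coordinate hyperplanes $x_k=0$ is a lower-dimensional induction (or is absorbed since those loci contribute dimension $<s-m_t$ after further genericity), yielding $\height\ge\min\{s,m_t\}$.

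For the Cohen–Macaulay claim when $m_t<s$: once $\height(I_t)=m_t$ equals the Eagon–Northcott bound, the ideal $I_t$ is a \emph{perfect} ideal of grade $m_t$ — this is the theorem of Eagon–Northcott (the Eagon–Northcott complex resolves $K[\mathbf x]/I_t$ and has the right length), hence $K[\mathbf x]/I_t(A+B)$ is Cohen–Macaulay; I would cite the standard reference (Bruns–Vetter, \emph{Determinantal Rings}) rather than reprove it. For primeness over algebraically closed $K$, the cleanest route is generic smoothness/irreducibility: show the generic perturbation makes $V(I_t(M)^e)$ generically reduced and irreducible over $\overline{K(\mathbf u)}$ — irreducibility because the incidence variety above is irreducible (it fibers over $\mathbb A^s$ with irreducible fibers, the generic determinantal loci being irreducible) and dominates the $u$-space, so its generic fiber is irreducible — and then spread this out to a nonempty open set of $a\in K^r$, using that $I_t$ is already known to be Cohen–Macaulay (hence unmixed, so ``irreducible variety'' plus ``reduced'' upgrades to ``prime ideal''). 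I expect the main obstacle to be making the incidence-variety dimension count fully rigorous near the coordinate hyperplanes $x_k=0$, where the perturbation $\sum_k u_{ijk}x_k^N$ degenerates and the ``generic matrix'' argument breaks down; this is exactly where the hypothesis $N\ge 1$ together with $f_{ij}\in\langle\mathbf x\rangle$ is used, and the induction on $s$ (restricting to each $x_k=0$) is the technical device that closes the gap.
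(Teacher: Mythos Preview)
Your reduction to the universal matrix $M=[f_{ij}+\sum_k u_{ijk}x_k^N]$ over $S=K[\mathbf u,\mathbf x]$ and the appeal to Lemma~\ref{height of determinantal ideal} match the paper. For the height of $I_t(M)^e$ you propose an incidence-variety dimension count with an induction on $s$ along the coordinate hyperplanes; this can be made to work, but the paper's argument is shorter and purely algebraic. One localizes $S$ at $x_k$ and observes that $\frac{1}{x_k^N}F_{ij}=u_{ijk}+(\text{terms in }\mathbf u',\mathbf x,x_k^{-1})$, so the entries of $\frac{1}{x_k^N}M$ are algebraically independent over $K[\mathbf u',\mathbf x,x_k^{-1}]$ and $M$ becomes a \emph{generic} matrix in the sense of \cite{BV88}. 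Hence $I_t(M)\cdot S[x_k^{-1}]$ is prime of height $m_t$. Varying $k$ shows that $I_t(M)$ has a unique minimal prime $P_t$ of height $m_t$ containing no $x_k$, while every other associated prime contains $\langle x_1,\dots,x_s\rangle$; a short case split (invoking generic perfection \cite{HE71} when $m_t<s$ to exclude the latter primes) yields the height over $K(\mathbf u)[\mathbf x]$ and, crucially, shows that $I_t(M)=P_t$ is already prime in $S$ when $m_t<s$. Your incidence picture and the paper's localization are really two views of the same phenomenon---the map $u\mapsto M_u(\xi)$ surjects onto matrix space whenever some $\xi_k\ne 0$---but the algebraic version replaces your boundary induction and delivers primeness upstairs for free.

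That last point is where your proposal has a genuine gap. The inference ``the incidence variety is irreducible (it fibers over $\mathbb A^s$ with irreducible fibers) and dominates $\mathbb A^r$, so its generic fiber is irreducible'' is not valid: an irreducible total space with a dominant morphism need not have \emph{geometrically} irreducible generic fiber, and geometric irreducibility is precisely what is needed to conclude that $V(I_t(M_a))$ is irreducible for generic closed $a\in K^r$. (Your fiber description also breaks down at $\xi=0$, where the fiber is all of $\mathbb A^r$.) Nor does Cohen--Macaulayness supply the reducedness you invoke: unmixed does not imply radical for determinantal ideals of non-generic matrices. The paper avoids both issues: having shown $I_t(M)$ prime in $S$, it checks directly that the extension $I_t(M)^e\subset K(\mathbf u)[\mathbf x]$ remains prime, and then applies \cite[Proposition~3.5]{NT99} to transport primeness to the generic specialization when $K$ is algebraically closed.
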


\begin{proof}
	For  $i=1,\ldots, m$, $j=1,\ldots, n$, let
	$$F_{ij}=\sum\limits_{k=1}^s u_{ijk}x_k^N+f_{ij},$$
	where ${\bf u}=\{u_{ijk}\mid i=1,\ldots, m, j=1,\ldots,n, k=1,\ldots, s\}$ is a set of new indeterminates. Set $S=K[{\bf u}, {\bf x}]$ and  let 
	$$M=[F_{ij}]\in Mat(m, n, S).$$ 
	The main work is done to prove the following
	\vskip 4pt
	{\bf Claim:} For all $t=1,\ldots, n$, $I_t(M)^e$ is a proper ideal of $K({\bf u})[{\bf x}]$ and
	$$\height (I_t(M)^e)=\min\{s, m_t\}.$$
	
	Indeed, the first assertion of the claim follows from Lemma \ref{height of determinantal ideal}. Now we prove the second statement. Fix $t\in\{1,\ldots, n\}$. For every $k=1,\ldots, s$ and for all $i, j$
	we have that in $S\left[\frac{1}{x_k}\right]$ $$\frac{1}{x_k^N}F_{ij}=u_{ijk}+\frac{1}{x_k^N}\left(\sum\limits_{h\ne k} u_{ijh}x_h^N+f_{ij}\right).$$
	Therefore, the elements $\frac{1}{x_k^N}F_{ij}$, $k=1,\ldots, s$, $i=1,\ldots,m$, $j=1,\ldots, n$ are algebraically independent over $K\left[{\bf u'}, {\bf x}, \frac{1}{x_k}\right]$, where ${\bf u'}={\bf u}\smallsetminus\{u_{ijk} \, |\, i=1,\ldots, m, j=1,\ldots, n\}$. This means that $\frac{1}{x_k^N}\cdot M$ is a generic matrix over the ring $K\left[{\bf u'}, {\bf x}, \frac{1}{x_k}\right]$ (generic in the sense that the entries are indeterminates, not to be confused with generic points). Note that 
		\[K[{\bf u'}] [{\bf x}]\left[\frac{1}{x_k}\right]\left[\frac{1}{x_k^N} F_{ij} | \  i= 1,\ldots ,m, \ j=1,\ldots, n\right] = S\left[\frac{1}{x_k}\right].\]
	It is well known that the determinantal ideals of a generic matrix are prime and have maximal height (cf. e.g. \cite[(2.13) and (5.18)]{BV88}).
	Hence, $I_t\left(\frac{1}{x_k^N}\cdot M\right)$, the ideal of $S\left[\frac{1}{x_k}\right]$ generated by all $t\times t$ minors of $\frac{1}{x_k^N}\cdot M$, is a prime ideal of the height $m_t$. 
	
	Since
	$I_t(M)\cdot S\left[\frac{1}{x_k}\right]=I_t\left(\frac{1}{x_k^N}\cdot M\right),$ we have that
	$I_t(M)\cdot S\left[\frac{1}{x_k}\right]$ is a prime ideal of height $m_t$. 
%	$$\height \left(I_t(M)\cdot S\left[\frac{1}{x_k}\right]\right)=m_t.$$	
This implies that $I_t(M)$ has a prime component, say $P^{(t)}_k$, which does not contain $x_k$, and all other associated primes  must contain $x_k$. 
	
	Let now $k'\in\{1,\ldots, s\}$ and $k'\ne k$. By a similar argument,  $I_t(M)\cdot S\left[\frac{1}{x_k\cdot x_{k'}}\right]$ is a prime ideal of $S\left[\frac{1}{x_k\cdot x_{k'}}\right]$ . It follows that
	$I_t(M)$ has a prime component, say $P^{(t)}_{k,k'}$, which does not contain $x_k$ and $x_{k'}$, and all other associated primes must contain the product $x_k\cdot x_{k'}.$ Therefore, $P^{(t)}_k= P^{(t)}_{k'}$ for all $k\ne {k'}$. Let $P_t$ denote this prime component. Then $P_t$ does not contain any $x_k$, $k=1,\ldots, s$, and all other associated primes of $I_t(M)$ must contain all $x_1,\ldots, x_s.$ Let $Q_t$ be the intersection of all primary components of $I_t(M)$ whose associated primes contain  $x_1,\ldots, x_s.$ Note that $Q_t=S$ if such components do not exit. Then 
	$$I_t(M)=P_t\cap Q_t.$$
	Moreover, since $P_t$ is the only associated prime of $I_t(M)$ which does not contain any $x_k$, 
	$$\height (P_t)=\height \left(I_t(M)\cdot S\left[\frac{1}{x_k}\right]\right)=m_t.$$
	
	We have 
	$$I_t(M)^e=\left(P_t\right)^e\cap (Q_t)^e.$$
	Let first $t$ be such that $s\le m_t.$ We consider two cases:
	
	{\textit {Case 1:}} $\left(P_t\right)^e$ is strictly contained in $ K({\bf u})[{\bf x}]$. Then 
	$$\height \left(\left(P_t\right)^e\right)=\height (P_t)=m_t\ge s,$$
	so that $\height \left(\left(P_t\right)^e\right)=s$.	
	On the other hand, since all other associated primes of $I_t(M)$ contain $x_1,\ldots, x_s$ if they exist, all associated primes of $\left(Q_t\right)^e$ have the height $s$. Hence, in this case $\height (I_t(M)^e)=s.$
\vskip 5pt
	{\textit {Case 2:}} $(P_t)^e=K({\bf u})[{\bf x}]$. Then 
	$I_t(M)^e=(Q_t)^e$
	 so that $\height (I_t(M)^e)=s.$
\vskip 5pt
\noindent	Let now  $t$ be such that $s>m_t$. 	In this case $P_t$ has the least height among the associated primes of $I_t(M)$. Hence, $\height (I_t(M))=\height (P_t)=m_t$. By the generic perfection \cite{HE71}, $I_t(M)$ is a perfect ideal of $S$, and $S/I_t(M)$ is a Cohen-Macaulay ring. Hence,
	 all associated primes of $I_t(M)$ have the same height \cite{HE71} and $Q_t$ does not exist, showing that $I_t(M)=P_t$.  Hence $I_t(M)$ is a prime ideal and
	$$\height (I_t(M)^e)=\height( P_t)=m_t.$$
	This finishes the claim.
	
	Now let $r=mns$ be the number of the new indeterminates ${\bf u}=u_{ijk}$. By Lemma  \ref{height of determinantal ideal} we get for generic $a\in K^r$, 
	$\height (I_t(M_a))=\height (I_t(M)^e)=\min\{s, m_t\}$,
%	$\depth (I_t(M_a))=\depth (I_t(M)^e)$,
 and if $m_t<s$ then
$K[ {\bf x}] / I_t(M_a)$ is Cohen-Macaulay.
	
Let $K$ be algebraically closed and $t$ such that $m_t<s$. To see that $I_t(M_a)$ is a prime ideal for generic $a$ note that $I_t(M)^e$ is a prime ideal in $K(\bf u) [\bf x]$. In fact, let $fg \in I_t(M)^e$ 
with $f, g \in K(\bf u)[\bf x]$. Clearing denominators, we may assume $f, g \in K[\bf u, \bf x]$. There exists $0 \neq h(u) \in K[\bf u]$ s.t. $hfg \in I_t(M)$. Since  $h(u) \notin I_t(M)$ (otherwise $1 \in I_t(M)^e$, a contradiction) we get $fg \in I_t(M)$ and hence $f$ or $g$ is in $I_t(M)$, showing that $I_t(M)^e$ is prime.  Now we get the result from  \cite[Proposition 3.5 ]{NT99}. \end{proof}

\begin{Problem}\rm {\label{pr2.6}}
Does an analogous statement as in Theorem \ref{generic determinantal ideals} hold for matrices with entries in $K[[\bf x]]$ instead of $K[\bf x]$? Using Remark \ref {specialization} with a new definition of specialization in power series rings, we expect this for $K$ uncountable and $a$ outside the union of countably many nowhere dense subvarieties.  
\end {Problem}\rm 

The above proof does not work for power series, since the straightforward definition of specialization $I_a:=\{f(a,{\bf x})\mid f({\bf u},{\bf x})\in I\cap K[{\bf u}][[{\bf x}]]\}$ for an ideal $I$ in  $K({\bf u})[[{\bf x}]]$ may be 0 for all $a$ even if $I \neq 0$, as the following example shows. This example is due to Osgood and was also used by Gabrielov in his counter example to the nested approximation theorem in the analytic case (cf. \cite {Ro13}):

\begin {Example} \label{osgood} \rm{
Consider the morphism 
\[ \hat{\varphi} : \C[[u,x_1, x_2]] \to \C[[y_1,y_2]], \ u \mapsto y_1, \, x_1  \mapsto y_1y_2, \, x_2 \mapsto y_1y_2 \cdot \exp(y_2),\]
and let  $\varphi : \C(u)[[x_1, x_2]] \to \C[[y_1,y_2]]$ be given by the same assignment.
It is shown in \cite {Os16} that $\ker (\hat{\varphi}) = 0$. However, $I:= \ker(\varphi) \neq 0$ since it contains $x_2 - x_1  \cdot \exp(x_1/u)$, while $I \cap  \C[u][[x_1, x_2]] \subset \ker (\hat{\varphi})=0$} and hence $I_a =0$ for all $a \in \C$.
\end {Example}

\begin{Remark} \label {specialization} \rm
The problem is, that elements in the ring $K({\bf u})[[{\bf x}]]$ may have infinitely many denominators. A reasonable definition for a specialization in this ring is
$$I_a := \{f(a,{\bf x})\ | \  f({\bf u},{\bf x})\in I \text {, no denominator of } f \text { vanishes at } a\}. $$ 
	With this definition, which can be easily extended to finitely generated submodules of  $(K({\bf u})[[{\bf x}]])^p$, many properties of $I$ hold also for $I_a$ (e.g. the Hilbert-Samuel functions coincide) if $a$ is contained in the complement of countably many closed proper subvarieties of $K^r$.  For this to be useful we must assume that $K$ is uncountable.
	We do not pursue this here, since we need only the specialization  for ideals $I \subset K({\bf u})[{\bf x}]$.
\end{Remark} 

\medskip
The following theorem provides a necessary condition for finite determinacy for matrices with entries in $R = K[[x_1, \ldots, x_s]]$ with respect to the group $G$.

\begin{Theorem}{\label{height}}
	Let $A=[a_{ij}]\in \mathfrak{m}\cdot Mat(m,n,R)$ be finitely $G$-determined. Then the following holds:
	\begin{enumerate}
		\item\label{height is expected} Let  $K$ be infinite. Then we have for $t\in\{1, \ldots, n\}$   with $m_t:=(m-t+1)(n-t+1)$
		\[\height (I_t(A))=\min\{ s, m_t\}.\]
		
		\item\label{height 2} For any $K$ the following holds.
		
		(i) If $ s \geq mn$ then $\height(I_1(A))=mn$, i.e. $\{a_{ij}\}$ is an $R$-sequence. 
		
		(ii) If $s\le mn$ then $I_1(A)\supset \mathfrak{m}^k$ for some positive integer $k$, i.e. the entries of $A$ generate an $\mathfrak{m}$-primary ideal in $R$.
	\end{enumerate}
\end{Theorem}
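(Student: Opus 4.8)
The plan is to reduce the power series situation to the polynomial situation of Theorem \ref{generic determinantal ideals} using the hypothesis of finite $G$-determinacy, and then to read off the height statements. First I would fix a determinacy bound $k$ so that $A$ is $G$ $k$-determined, i.e. any matrix $B$ with $B - A \in \mathfrak{m}^{k+1}M_{m,n}$ is left--right equivalent to $A$. Since left--right equivalence is induced by invertible matrices $U, V$ and an automorphism $\phi$, and since the Fitting ideals $I_t$ are invariant under multiplication by invertible matrices and transform by $I_t(\phi(A)) = \phi(I_t(A))$ under $\phi \in \mathrm{Aut}(R)$, the heights $\height(I_t(B))$ and $\height(I_t(A))$ agree for all such $B$ (height is preserved by automorphisms of $R$, being intrinsic to the local ring $R/I_t$). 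So it suffices to exhibit one convenient $B$ in the $\mathfrak{m}^{k+1}$-neighborhood of $A$ whose Fitting ideals have the asserted heights.

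Next I would replace $A$ by its truncation: let $A' = [a'_{ij}]$ be the matrix of polynomial truncations of the $a_{ij}$ up to order $k$, so $A - A' \in \mathfrak{m}^{k+1}M_{m,n}$ and $A' \in \mathrm{Mat}(m,n,K[{\bf x}])$ with entries in $\langle x_1,\ldots,x_s\rangle$. Now apply Theorem \ref{generic determinantal ideals} with this $A'$ in place of $A$ and with $N = k+1$: for generic $(a_{ijk}) \in K^{mns}$ the matrix $B' := A' + [\sum_k a_{ijk}x_k^{N}]$ satisfies $\height(I_t(B')) = \min\{s, m_t\}$ over $K[{\bf x}]$ for each $t = 1,\ldots,n$. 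Viewing $B'$ as an element of $M_{m,n}$ via $K[{\bf x}] \hookrightarrow R$, we have $B' - A \in \mathfrak{m}^{k+1}M_{m,n}$, hence $B' \mathop\sim\limits^{G} A$, hence $\height(I_t(A)) = \height(I_t(B'))$. The remaining point is that passing from the polynomial ring $K[{\bf x}]$ to its completion $R$ preserves the height of $I_t(B')$: this holds because $R$ is a faithfully flat Noetherian extension of the localization $K[{\bf x}]_{\langle {\bf x}\rangle}$, the entries of $B'$ lie in $\langle {\bf x}\rangle$, so $\variety(I_t(B'))$ passes through the origin and $\height(I_t(B') R) = \height(I_t(B') K[{\bf x}]_{\langle{\bf x}\rangle}) = \height(I_t(B'))$, the last equality since the determinantal variety of a matrix with entries vanishing at $0$ has a component through the origin realizing the minimal height. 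This proves part \ref{height is expected}.

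For part \ref{height 2}, I would specialize to $t = 1$, so $m_1 = mn$ and $I_1(A) = \langle a_{ij}\rangle$. When $s \geq mn$, part \ref{height is expected} (over an infinite field) gives $\height(I_1(A)) = mn$, which for $R$ Cohen--Macaulay means the $mn$ entries $a_{ij}$ form an $R$-regular sequence; for the stated generality over an arbitrary (possibly finite) $K$ one replaces the generic-choice argument by passing to an infinite extension field $K \subset \overline{K}$ (or $K(T)$), noting that $\height(I_1(A) \cdot \overline{K}[[{\bf x}]]) = \height(I_1(A))$ since $\overline{K}[[{\bf x}]] \supset K[[{\bf x}]]$ is faithfully flat with trivial fiber dimension. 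When $s \leq mn$, part \ref{height is expected} gives $\height(I_1(A)) = s = \dim R$, so $\sqrt{I_1(A)} = \mathfrak{m}$ (the only prime of height $s$ in $R$), i.e. $I_1(A) \supset \mathfrak{m}^k$ for some $k$; again for arbitrary $K$ one argues after base change to an infinite field.

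The main obstacle I expect is the transition between $K[{\bf x}]$ and $R = K[[{\bf x}]]$: Theorem \ref{generic determinantal ideals} is stated over the polynomial ring, and one must be careful that the height computed there is the \emph{local} height at the origin, not just the global height of the affine determinantal variety. This is exactly why it matters that the entries $f_{ij}$ lie in $\langle x_1,\ldots,x_s\rangle$ (so that $0$ is a point of the variety) and that, in the range $m_t < s$, Theorem \ref{generic determinantal ideals} even yields irreducibility (Cohen--Macaulayness, and primeness over an algebraically closed field), which forces all components — in particular the local branch at $0$ — to have the same height $m_t$; in the range $m_t \geq s$ the variety is $\{0\}$-dimensional enough that the height is automatically $s$. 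A secondary, more bookkeeping-type obstacle is handling finite fields in part \ref{height 2}: here one cannot invoke the genericity argument directly and must justify the faithfully flat base change to an infinite coefficient field, together with the observation that finite $G$-determinacy over $K$ implies finite $G$-determinacy (with the same bound) over any field extension, which keeps the reduction valid.
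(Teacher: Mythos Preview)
Your approach to part~\ref{height is expected} is essentially the paper's: truncate $A$ to a polynomial matrix, invoke Theorem~\ref{generic determinantal ideals} to produce a high-order polynomial perturbation whose Fitting ideals have maximal height, and transfer the height back to $A$ via $G$-invariance of Fitting ideals. Your discussion of the passage from $K[{\bf x}]$ to $R=K[[{\bf x}]]$ is in fact more careful than the paper's one-line appeal to flatness, and correctly identifies why the Cohen--Macaulay conclusion of Theorem~\ref{generic determinantal ideals} (in the range $m_t<s$) and the height-$s$ conclusion (in the range $m_t\ge s$) guarantee that the local height at the origin matches the global one.

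For part~\ref{height 2}, however, your route diverges from the paper's and contains a genuine gap. You reduce to part~\ref{height is expected} by base-changing to an infinite extension $L\supset K$ and then reading off $\height(I_1(A))$ from $\height(I_1(A)\cdot L[[{\bf x}]])$. The height comparison is fine, but to apply part~\ref{height is expected} over $L$ you need $A$ to be finitely $G$-determined \emph{over $L$}, and you treat this as a bookkeeping observation. It is not: $k$-determinacy over $K$ only says that every perturbation with entries in $K[[{\bf x}]]$ lies in the $G(K[[{\bf x}]])$-orbit of $A$; extending this to perturbations over $L[[{\bf x}]]$ is exactly the kind of step that becomes delicate in positive characteristic when orbit maps can be inseparable (cf.\ Remark~1.2 and \cite[Example~2.9]{GP16}). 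In characteristic~$0$ one could pass through the tangent-image criterion of Theorem~\ref{GP16}, which is visibly stable under base change, but in positive characteristic that equivalence is precisely the open Conjecture~1.3, so you cannot invoke it here. The paper sidesteps the issue entirely: rather than a generic perturbation it takes the \emph{explicit} perturbation $B$ with entries $x_1^N,\ldots,x_{mn}^N$ in case~(i), respectively $x_1^N,\ldots,x_s^N,0,\ldots,0$ in case~(ii), and computes $\height(I_1(A_0+B))$ directly over the given field $K$ by a leading-ideal (Gr\"obner) argument, never needing $K$ to be infinite.
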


\begin{proof}
	1. Assume that $A$ is $G$ $l$-determined and let $N \geq l+1$. Applying Theorem \ref{generic determinantal ideals} to $A_0=jet_l(A)$ (the truncation of the entries of $A$ after degree $l$ such that $A_0$ has polynomial entries and is $G$-equivalent to A), there is a matrix $B=[g_{ij}]\in Mat(m, n, K[{\bf x}])$ with entries of the form
	$$g_{ij}=\sum\limits_{k=1}^s c_{ijk}x_k^{N},\hskip 5pt c_{ijk}\in K,$$
    such that for all $t\in \{1,\ldots, n\}$ we obtain
	$$\height I_t(A_0+B)=\min\{s, m_t\}.$$
	
	The same equality holds
	for the extension of the ideal $I_t(A_0+B)$ in $R$ since the morphism $K[{\bf x}]\hookrightarrow R$ is flat. 
	Now, since $A\mathop\sim\limits^{G}A_0\mathop\sim\limits^{G}A_0+B$, we have 
	$\height I_t(A)=\height I_t(A_0)=\height I_t(A_0+B)$, since Fitting ideals are invariant under $G$-equivalence.

	2. This holds for any $K$ since 	
	we do not need Theorem \ref{generic determinantal ideals}  (where $K$ infinite was used) to show that
	\[\height(I_1(A))=\min\{s, mn\}.\]	
In fact, in case (i) we choose the entries of the matrix $B=[g_{ij}]$ to be $x_1^{N}, \ldots, x_{mn}^{N}$ and in case (ii) we choose the
first $s$ entries of the matrix $B$ to be $x_1^{N}, \ldots, x_s^{N}$ and the remaining entries 0, with $N$ sufficiently big. Then $\height \left(I_1(A_0+B)\right) = \min\{s, mn\}$, which can be seen for arbitrary $K$ as follows. Choose a global degree ordering on the variables, such that  $x_1^{N}, \ldots, x_{mn}^{N}$ in case (i) and $x_1^{N}, \ldots, x_s^{N}$ in case (ii) generate the leading ideal of $I_1(A_0+B)$.  By \cite [Corollary 5.3.14]{GP07} the dimension of  $K[{\bf x}]/I_1(A_0+B)$ is $s-mn$ in case (i) and $0$ in case (ii). In case (i)  the entries of $A_0+B$ are a regular sequence and generate a complete intersection. This is unmixed and therefore $\dim(R/I_1(A_0+B))=s-mn$. In case (ii) we get $\dim(R/I_1(A_0+B))=0$.
Since $A\mathop\sim\limits^{G}A_0+B$ we get $\height \left(I_1(A)\right)=\min\{mn,s\}$ and hence the result.
%$I_1(A)\supset \mathfrak{m}^k$ for some $k$.
\end{proof}

\begin{Remark}\rm
	The above necessary condition is of course not sufficient. For example,  in any characteristic $f=x^k\in K[[x,y]]$ is not finitely contact determined by Theorem \ref{hypersurface} but $\height\left(\langle f\rangle\right)=1.$
\end{Remark}

% % % % % % % % % % % % % % % % % % % % % % % % % % % % % % % % % %
\section{A finite determinacy criterion for column matrices}\label{special case}
Theorem \ref{height}  shows that finite $G-$determinacy of matrices in $M_{m,n}$ is rather restrictive. A  criterion which is at the same time necessary and sufficient for finite $G$-determinacy  for arbitrary $m, n$ in positive characteristic is unknown to us. 
 In this section we prove such a criterion for  1-column matrices in Theorem \ref{column matrix}. 
 
For a matrix $A=[a_1\hskip 4pt a_2\hskip 4pt\ldots\hskip 4pt a_m]^T\in M_{m,1},$ 
we denote by 
$Jac(A):=\left[\frac{\partial a_i}{\partial x_j}\right]\in M_{m,s}$
the Jacobian matrix of the vector $(a_1,\ldots, a_m)\in R^m$ and call it the Jacobian matrix of $A$. The extended tangent image has then the following concrete description as submodule of $R^m = M_{m,1}$:
		\[\tilde T_A^e\left(GA\right) =  IR^m+\left(\frac{\partial a_i}{\partial x_j}\right)\cdot R^s,\]
where $I=\langle a_1,\ldots, a_m\rangle $ is the ideal in $R$ generated by the entries of $A$ and		 $\left(\frac{\partial a_i}{\partial x_j}\right)\cdot R^s$ is the $R$-submodule of $R^m$ generated by the columns of $Jac(A)$.

\begin{Lemma}\label{isolated and presentation matrix}
	Let $A=[a_1\hskip 4pt a_2\hskip 4pt\ldots\hskip 4pt a_m]^T\in Mat(m, 1, \mathfrak{m})$ be such that $ m\le s$, where  $s$ is the number of variables. Let $\Theta_{(G,A)}$ be a presentation matrix of $M_{m,1}/\tilde T^e_A(GA)$, where $\tilde T^e_A(GA)$ is the extended tangent image at $A$ to the orbit $GA$. Then 
	\[\sqrt{I_1(A)+I_m(Jac(A))}= \sqrt{I_m\left(\Theta_{(G,A)}\right)} 
	= \sqrt{\Ann_R(M_{m,1}/\tilde T^e_A(GA))}.\] 
In particular, $\dim_K (M_{m,1}/\tilde T^e_A(GA)) < \infty$ iff  $\dim_K\big(R/I_1(A)+I_m(Jac(A))\big)< \infty$.
\end{Lemma}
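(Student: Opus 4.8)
The plan is to identify the module $M_{m,1}/\tilde T^e_A(GA)$ as the cokernel of an explicit $R$-linear map and then compute the radical of its annihilator via Fitting ideals. Recall that $\tilde T^e_A(GA) = I R^m + Jac(A)\cdot R^s$ where $I = I_1(A) = \langle a_1,\dots,a_m\rangle$. Writing $a = (a_1,\dots,a_m)^T$, the submodule $IR^m$ is generated by the $m^2$ columns $a_i e_j$, i.e.\ by the columns of the matrices $a_i\cdot \mathrm{Id}_m$ for $i=1,\dots,m$; equivalently $IR^m$ is the image of the map $R^{m^2}\to R^m$ whose matrix is the horizontal concatenation $[\,a_1 I_m\ |\ \cdots\ |\ a_m I_m\,]$. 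Hence $M_{m,1}/\tilde T^e_A(GA) = \coker(\Psi)$ where $\Psi\colon R^{m^2+s}\to R^m$ is the $m\times(m^2+s)$ matrix $\Psi = [\,a_1 I_m\ |\ \cdots\ |\ a_m I_m\ |\ Jac(A)\,]$. Thus $\Theta_{(G,A)}$ may be taken to be $\Psi$ (any presentation matrix gives the same Fitting ideals), so $I_m(\Theta_{(G,A)}) = I_m(\Psi)$ and the $0$-th Fitting ideal $I_m(\Psi)$ satisfies $\sqrt{I_m(\Psi)} = \sqrt{\Ann_R(\coker\Psi)}$ by the standard fact that $\Fitt_0(N)$ and $\Ann(N)$ have the same radical for a finitely generated module $N$. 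This already gives the second equality in the displayed chain.

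For the first equality it remains to show $\sqrt{I_m(\Psi)} = \sqrt{I_1(A)+I_m(Jac(A))}$. First I would note the trivial inclusion: $I_m(Jac(A))\subseteq I_m(\Psi)$ since $Jac(A)$ is a block of columns of $\Psi$, and also $I_1(A)^m\subseteq I_m(\Psi)$ because the $m\times m$ minor of $\Psi$ formed from the columns $a_{i_1}e_1,\dots,a_{i_m}e_m$ is $\pm a_{i_1}\cdots a_{i_m}$, so products of $m$ entries of $A$ lie in $I_m(\Psi)$; hence $I_1(A)+I_m(Jac(A))\subseteq \sqrt{I_m(\Psi)}$, giving one inclusion of radicals. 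For the reverse inclusion I would argue that any $m\times m$ minor of $\Psi$ lies in $\sqrt{I_1(A)+I_m(Jac(A))}$: a general $m$-subset of columns of $\Psi$ consists of some columns of the form $a_{i}e_{j}$ and some columns of $Jac(A)$; expanding the determinant along the $a_i e_j$ columns (which are multiples of coordinate vectors) shows the minor is a sum of terms, each a product of entries $a_i$ times a minor of a submatrix of $Jac(A)$ — if at least one $a_ie_j$ column is present the term carries a factor from $I_1(A)$, and if no such column is present the "minor" is exactly an $m\times m$ minor of $Jac(A)$, i.e.\ lies in $I_m(Jac(A))$ (here $m\le s$ is used so that $Jac(A)$ has enough columns for $I_m(Jac(A))$ to be the relevant ideal). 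Hence every generator of $I_m(\Psi)$ lies in $I_1(A)+I_m(Jac(A))$, and so $I_m(\Psi)\subseteq I_1(A)+I_m(Jac(A))$, in fact with no radical needed on the right. Combining, $\sqrt{I_m(\Psi)} = \sqrt{I_1(A)+I_m(Jac(A))}$.

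The final ``in particular'' statement is then immediate: a finitely generated $R$-module $N$ over the Noetherian local ring $R$ has $\dim_K N<\infty$ if and only if $\Ann_R(N)$ is $\mathfrak{m}$-primary, i.e.\ $\sqrt{\Ann_R N}=\mathfrak{m}$, equivalently $R/\Ann_R(N)$ is Artinian, equivalently $R/\sqrt{\Ann_R N}$ is. Applying this to $N = M_{m,1}/\tilde T^e_A(GA)$ and using the established equality of radicals, $\dim_K(M_{m,1}/\tilde T^e_A(GA))<\infty$ iff $\sqrt{I_1(A)+I_m(Jac(A))}=\mathfrak{m}$ iff $R/(I_1(A)+I_m(Jac(A)))$ is Artinian iff $\dim_K\big(R/(I_1(A)+I_m(Jac(A)))\big)<\infty$.

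The routine parts here are the two standard module-theory facts (radical of $\Fitt_0$ equals radical of $\Ann$; finite length iff $\mathfrak m$-primary annihilator), which I would cite rather than reprove. The only genuinely computational step — and the one I expect to require the most care — is the cofactor-expansion bookkeeping showing $I_m(\Psi)\subseteq I_1(A)+I_m(Jac(A))$; one must check that every maximal minor of the concatenated matrix $\Psi$ decomposes as claimed, paying attention to which coordinate directions $e_j$ are hit by the $a_ie_j$ columns and ensuring the leftover block is genuinely a submatrix of $Jac(A)$ (this is where the hypothesis $m\le s$ enters, guaranteeing $Jac(A)$ has at least $m$ columns so that $I_m(Jac(A))$ is the meaningful ideal rather than the zero ideal). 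Everything else is formal.
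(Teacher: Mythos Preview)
Your proof is correct and follows essentially the same route as the paper's own argument: both construct the explicit presentation matrix $\Theta_{(G,A)}$ as the concatenation of $Jac(A)$ with the $m^2$ columns $a_ie_j$, invoke the standard Fitting--annihilator radical equality, and prove $\sqrt{I_m(\Theta_{(G,A)})}=\sqrt{I_1(A)+I_m(Jac(A))}$ via the two observations $I_1(A)^m\subset I_m(\Theta_{(G,A)})$ (minors from the $a_ie_j$ block) and $I_m(Jac(A))\subset I_m(\Theta_{(G,A)})$ (Jacobian block), together with the reverse inclusion $I_m(\Theta_{(G,A)})\subset I_1(A)+I_m(Jac(A))$. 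The only difference is cosmetic: the paper declares this last inclusion ``obvious'' while you spell out the cofactor expansion, and your column blocks are ordered as $a_iI_m$ rather than the paper's row-wise $\vec a$ arrangement, but the column sets and hence the Fitting ideals coincide.
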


\begin{proof}
	A presentation matrix of $M_{m,1}/\tilde T^e_A(GA)$ is the following two-block matrix of size $m\times( m^2+s)$	
	\[\Theta_{(G,A)}=
	\left[ {\begin{array}{*{20}c}
			{\frac{{\partial a_{1} }}{{\partial x_1 }}} & {\frac{{\partial a_{1} }}{{\partial x_2 }}} & {\ldots} & {\frac{{\partial a_{1} }}{{\partial x_s }}}  \\
			{\frac{{\partial a_{2} }}{{\partial x_1 }}} & {\frac{{\partial a_{2} }}{{\partial x_2 }}} & {\ldots} & {\frac{{\partial a_{2} }}{{\partial x_s }}}  \\
			{\vdots} & {\vdots} & {\ldots} & {\vdots}  \\
			{\frac{{\partial a_{m} }}{{\partial x_1 }}} & {\frac{{\partial a_{m} }}{{\partial x_2}}} & {\ldots} & {\frac{{\partial a_{m} }}{{\partial x_s }}}  \\
			\end{array}} \right.\left| {\begin{array}{*{20}c}
				\vec{a} & \vec{0} & \vec{0} & {\ldots}  & \vec{0}\\
				\vec{0} &  \vec{a} & \vec{0} & {\ldots} & \vec{0}  \\
				{\vdots} & {\vdots} & {\vdots} & {\ldots} & {\vdots} \\
				\vec{0}  & \vec{0} & \vec{0} & {\ldots}  & \vec{a}  \\
				\end{array}} \right],
	\]	
	where $\vec{a}:=(a_1,\ldots, a_m)$ and $\vec{0}:=(0,\ldots,0)\in K^m$. Since $m\le s$, it is obvious that
	\[I_m\left(\Theta_{(G,A)}\right)\subset \langle a_1,\ldots, a_m\rangle + I_m(Jac(A)).\]
On the other hand we have the inclusion
	\[\sqrt{\langle a_1,\ldots, a_m\rangle+I_m(Jac(A))}\subset
	\sqrt{\sqrt{\langle a_1,\ldots, a_m\rangle}+\sqrt{I_m(Jac(A))}}\subset\sqrt {I_m(\Theta_{(G,A)})}.\] 
	since  $\langle a_1,\ldots, a_m\rangle^m$ is the ideal generated by $m\times m$ minors of the right-hand block of $\Theta_{(G,A)}$ and $Jac(A)$ is a block of $\Theta_{(G,A)}$. The rest is well-known (see also \cite[Proposition 4.2]{GP16}).
\end{proof}

We show now that there exist finitely $G$-determined matrices in $Mat(m,1, R)$ with entries of arbitrary high order.

\begin{Proposition}\label{example isolated}
	Let $R=K[[x_1, \ldots, x_s]]$ and $M_{m,1}=Mat(m,1, R)$ with  $m \leq s$. Let $\characteristic(K)=p \geq 0$, $N\ge 2$ an integer and if $p>0$ let $p\nmid N$. Assume there are $c_{ij} \in K, i=1,\ldots, m, j=1,\ldots, s$, such that no maximal minor $m_1, \ldots, m_r$ of the $m\times s$ matrix $[c_{ij}]_{i=1,\ldots, m, j=1,\ldots, s}$ vanishes (which is always possible if $K$ is infinite or if $K$ is arbitrary and $m=1$). Set	
\[ f_i  := c_{i1}x^N_1+\cdots+c_{is}x^N_s, \ \  i=1,\ldots, m.  \] 
	 Then $A :=\left[f_1 \dots f_m\right]^T$ is finitely $G$-determined.
	 
	\end{Proposition}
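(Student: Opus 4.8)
The plan is to verify the sufficient criterion of Theorem~\ref{GP16}\,(1): it is enough to show that $\tilde T_A^e(GA)$ (equivalently $\tilde T_A(GA)$) has finite $K$-codimension in $M_{m,1}$. Since $m\le s$, Lemma~\ref{isolated and presentation matrix} applies and reduces this to showing that the ideal
\[
J:=I_1(A)+I_m(Jac(A))=\langle f_1,\dots,f_m\rangle+I_m(Jac(A))\subset R
\]
is $\mathfrak m$-primary. Because $J$ is generated by homogeneous polynomials of positive degree, $\dim_K(R/J)<\infty$ holds iff $\sqrt{J}=\mathfrak m$, and this can be checked over the algebraic closure $\bar K$; so I may and will assume $K=\bar K$ and must show $V(J)=\{0\}$ in $\A^s$.

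First I would compute $Jac(A)=[\,Nc_{ij}x_j^{N-1}\,]_{i=1,\dots,m,\ j=1,\dots,s}$. For an $m$-element subset $T=\{j_1<\dots<j_m\}\subset\{1,\dots,s\}$, pulling the factor $Nx_{j_l}^{N-1}$ out of the $l$-th column shows that the corresponding $m\times m$ minor of $Jac(A)$ equals $N^m\big(\prod_{j\in T}x_j^{N-1}\big)\det\big([c_{ij}]_{i=1,\dots,m,\ j\in T}\big)$. As $p\nmid N$ we have $N^m\neq 0$ in $K$, and $\det([c_{ij}]_{i,\,j\in T})$ is a maximal minor of $[c_{ij}]$, hence nonzero by hypothesis; therefore
\[
I_m(Jac(A))=\big\langle \textstyle\prod_{j\in T}x_j^{N-1}\ \big|\ T\subset\{1,\dots,s\},\ |T|=m\big\rangle,
\]
whose vanishing locus is $\bigcup_{|S|=m-1}L_S$, where $L_S:=\{x\mid x_j=0\text{ for }j\notin S\}$ runs over the $(m-1)$-dimensional coordinate subspaces.

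It then remains to show $V(f_1,\dots,f_m)\cap L_S=\{0\}$ for each $(m-1)$-element set $S$. For this I would first check that the $m\times(m-1)$ matrix $C_S:=[c_{ij}]_{i=1,\dots,m,\ j\in S}$ has full column rank $m-1$: otherwise all its $(m-1)\times(m-1)$ minors vanish, and expanding the $m\times m$ minor $\det\big([c_{ij}]_{i,\ j\in S\cup\{j_0\}}\big)$ along the extra column $j_0$ (some $j_0\notin S$, which exists since $|S|=m-1<m\le s$) shows this maximal minor of $[c_{ij}]$ is $0$, a contradiction. Now let $x\in L_S$ with $f_i(x)=0$ for all $i$; since $f_i$ restricted to $L_S$ is $\sum_{j\in S}c_{ij}x_j^{N}$, the vector $(x_j^{N})_{j\in S}$ lies in $\ker C_S=\{0\}$, whence $x_j=0$ for all $j\in S$ and so $x=0$. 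Thus $V(J)=\{0\}$, hence $J$ is $\mathfrak m$-primary, $\dim_K(R/J)<\infty$, and by Lemma~\ref{isolated and presentation matrix} together with Theorem~\ref{GP16}\,(1) the matrix $A$ is finitely $G$-determined.

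The only real obstacle is the final paragraph: on the coordinate subspaces where $Jac(A)$ loses rank one must cut down to the origin using the $f_i$ alone, and this is precisely where the hypothesis ``no maximal minor of $[c_{ij}]$ vanishes'' is used, via the Laplace-expansion argument above, while $p\nmid N$ enters only through $N^m\neq0$ in the computation of $I_m(Jac(A))$. The remaining steps are a routine unwinding of the tangent-image criterion and of the presentation-matrix description in Lemma~\ref{isolated and presentation matrix}.
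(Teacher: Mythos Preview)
Your proof is correct and follows essentially the same route as the paper: both reduce to showing $V(J)=\{0\}$ over $\bar K$, compute that the vanishing of $I_m(Jac(A))$ forces all but at most $m-1$ coordinates to be zero, and then use the rank hypothesis on $[c_{ij}]$ to conclude that the $f_i$ cut each such coordinate subspace down to the origin. Your Laplace-expansion justification that $C_S$ has full column rank is slightly more explicit than the paper's one-line appeal to the hypothesis, but the arguments are otherwise the same.
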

	
	\begin{proof}
	 
	 Let $J$ be the ideal of $K[{\bf x}]$ generated by $f_1, f_2,\ldots, f_m$ and all $m\times m$ minors of $Jac(A)$. We claim that $V(J)=\{0\}$ in $\bar{K}^s$, with $\bar{K}$ an algebraic closure of $K$. Indeed, let ${\bf a}=(a_1,a_2,\ldots,a_s)\in V(J)$. Then at least $s-m+1$ components of ${\bf a}$ must  be zero, since  ${\bf a}$ is a zero of  all products $x^{N-1}_{j_{1}} \cdot x^{N-1}_{j_{2}}\cdots x^{N-1}_{j_{m}}$, where $j_i\in\{1,2,\ldots,s\}$ for all $i=1,\ldots,m$ and $j_i\ne j_k$ for all $i\ne k$, and $m \leq s$. Without loss of generality we assume that the last $s-k$ components of ${\bf a}$ are zero for some $k\le m-1$.  Then $f_i({\bf a}) = c_{i1}a_1^N+c_{i2}a_2^N+\cdots +c_{ik}a_k^N.$
	 
	 Consider the homogeneous  system of $m$ linear equations in $k$ variables $y_1,\ldots,y_k$ 
	 $$(H): \sum\limits_{j=1}^{k}c_{ij}y_j=0, \hskip 10pt i=1,\ldots,m.$$
	 Since ${\bf a}$ is also a zero of $f_1,\ldots, f_m$, it follows that $y_j=a^N_j$, $j=1,\ldots,k$, is a solution of $(H)$.
	 By the choice of $c_{ij}$ there must be a non-zero $k\times k$ sub-determinant of the coefficient matrix of $(H)$. This implies that $(H)$ has only the trivial solution. Therefore, ${\bf a}={\bf 0}$  and the claim follows.  As a consequence, $\dim(K[{\bf x}]/J)=0$ and hence $\dim(K[[{\bf x}]]/J\cdot K[[{\bf x}]])=0$. Applying Lemma \ref{isolated and presentation matrix} and \cite[Proposition 4.2.5]{GP16}, $A$ is finitely $G$-determined.
\end{proof}

\begin{Example}\rm
$K^{ms}\smallsetminus V(m_1\cdot m_2\cdots m_r) =\emptyset$ may happen for finite $K$. If $K=\{0,1\}, m=2$ and $s=4$, then it is easy to see that at least one of the six 2-minors of  $[c_{ij}]_{i=1,\ldots, 2, j=1,\ldots, 4}$ is 0 for any choice of $c_{ij} \in K$. However, if $m=1$ we can choose $c_{1j} =1$ for all $j$ and then $A=[f_1]$ is finitely $G-$determined for any $K$.
\end{Example}

We need the semi-continuity of the $K$-dimension of a 1-parameter family of finitely generated modules over a power series ring. This is well known for complex analytic power series by the finite coherence theorem. But since we could not find a reference for our situation, we give a proof here. 

Let  $P=K[t][[{\bf x}]]$,
${\bf x}=(x_1,\ldots,x_s)$, $K$ an arbitrary field, and $M$ a finitely generated $P$-module.  For
$t_0\in K = \A^1$, set
\begin{align*}
M(t_0):&=M\mathop\otimes\limits_{K[t]}\ (K[t]/\langle t-t_0\rangle)
\cong M/\langle t-t_0\rangle\cdot M,\\
%For all $t_0\in K$, set 
& \mathfrak{m}_{t_0}:=\langle x_1,\ldots, x_s, t-t_0\rangle\subset P.
\end{align*}
We remark that $M(t_0) \cong M_{\mathfrak{m}_{t_0}}\big/\langle t-t_0\rangle\cdot M_{\mathfrak{m}_{t_0}}$.

\begin{Proposition}\label{semi-continuity}
	With the above notations, for any  $o \in \A^1$ there is an open neighborhood $U$ of $o$ such that for all $t_0\in U$, we have
	\[\dim_KM(t_0)\le\dim_KM(o).\]
\end{Proposition}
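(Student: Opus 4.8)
The plan is to reduce the statement to a standard semicontinuity fact about fibre dimensions of coherent sheaves, phrased in the algebraic (formal) setting. First I would observe that $\dim_K M(t_0) < \infty$ is equivalent to $M(t_0)$ being an $\langle {\bf x}\rangle$-torsion module over $K[[{\bf x}]]$, i.e. to $\support_{K[[{\bf x}]]} M(t_0) \subseteq \{\mathfrak m\}$. If $\dim_K M(o) = \infty$ there is nothing to prove, so assume $\dim_K M(o) = d < \infty$. Then $M(o)$ has finite length, so its annihilator contains $\langle {\bf x}\rangle^N$ for some $N$; equivalently $\langle {\bf x}\rangle^N M \subseteq \langle t - o\rangle M$ after localizing at $\mathfrak m_o$. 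The idea is that such a relation persists in a Zariski neighbourhood of $o$ in $\A^1$.

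Concretely, I would work with the localization $P_{\mathfrak m_o}$, which is Noetherian, and with $M_{\mathfrak m_o}$, a finitely generated module over it. Pick generators $\mu_1,\dots,\mu_r$ of $M_{\mathfrak m_o}$. By the finite-length hypothesis on $M(o) \cong M_{\mathfrak m_o}/\langle t-o\rangle M_{\mathfrak m_o}$, each monomial $x_i^N \mu_j$ lies in $\langle x_1,\ldots,x_s\rangle^{N+1}M_{\mathfrak m_o} + \langle t-o\rangle M_{\mathfrak m_o}$ for suitable $N$; iterating and using that $M(o)$ is killed by $\langle {\bf x}\rangle^N$, one gets, for each $i,j$, an expression
\[
x_i^N \mu_j = (t-o)\, m_{ij} + \sum_\ell p_{ij\ell}({\bf x})\,\mu_\ell
\]
in $M_{\mathfrak m_o}$, with $m_{ij}\in M_{\mathfrak m_o}$ and $p_{ij\ell}\in \langle {\bf x}\rangle P_{\mathfrak m_o}$. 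Clearing the finitely many denominators (elements of $P\setminus\mathfrak m_o$, i.e. polynomials in $t$ not vanishing at $o$) that appear in the $m_{ij}$ and $p_{ij\ell}$, I obtain a single $g(t)\in K[t]$ with $g(o)\neq 0$ such that the analogous relations hold in the localization $M_{g}:=M\otimes_{K[t]}K[t]_g$ over $P_g := K[t]_g[[{\bf x}]]$. Then for every $t_0$ in the open set $U := \{t_0 : g(t_0)\neq 0\}$, specializing $t\mapsto t_0$ gives $\langle {\bf x}\rangle^N M(t_0) \subseteq \langle {\bf x}\rangle^{N+1} M(t_0)$ (reading the relations mod $t-t_0$), so by Nakayama over $K[[{\bf x}]]$ we get $\langle {\bf x}\rangle^N M(t_0) = 0$, hence $M(t_0)$ has finite length. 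To upgrade "finite length" to the dimension bound $\dim_K M(t_0)\le \dim_K M(o)$, I would use that $\dim_K M(t_0) = \ell_{P}(M_{\mathfrak m_{t_0}}/\langle t-t_0\rangle) $ and compare this with $\dim_K M_o$ via the exact sequence coming from multiplication by $t-t_0$ on $M$ localized appropriately, or more cleanly: shrink $U$ further so that $M_g$ is free over $K[t]_g$ (generic freeness, since $M$ is finitely generated over the Noetherian ring $K[t]$ after the above localization makes it $\langle{\bf x}\rangle$-torsion of bounded exponent, hence finitely generated as a $K[t]_g$-module). Then $\dim_K M(t_0)$ is constant $=\operatorname{rank}_{K[t]_g} M_g$ on $U$, and one checks $\operatorname{rank}_{K[t]_g} M_g \le \dim_K M(o)$ because $M(o)$ is a quotient of $M_g\otimes \kappa(o)$... actually it equals it, giving equality generically and $\le$ at $o$ by upper semicontinuity — so I should phrase the final comparison as: $\dim_K M(o) = \dim_{\kappa(o)}(M_g \otimes \kappa(o)) \ge \operatorname{rank} M_g = \dim_K M(t_0)$ for $t_0\in U$, which is exactly the classical upper-semicontinuity of fibre dimension for the finitely generated $K[t]_g$-module $M_g$.

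The main obstacle I anticipate is the passage from the power series ring to something Noetherian and the handling of denominators: $K[t][[{\bf x}]]$ is Noetherian, so that is fine, but one must be careful that localizing at the multiplicative set $K[t]\setminus\{0\}$ (or at $\{g^n\}$) commutes with the formal completion in the ${\bf x}$-direction in the way the argument needs, and that "finitely many denominators" really suffices — this works precisely because once we know $\langle{\bf x}\rangle^N$ kills the fibre, only finitely many module generators and finitely many monomials of bounded degree are involved, so only finitely many relations, hence finitely many denominators. The second delicate point is justifying generic freeness of $M_g$ over $K[t]_g$: after we know $\langle{\bf x}\rangle^N M_g = 0$ (shrinking $g$), $M_g$ is a finitely generated module over $K[t]_g[[{\bf x}]]/\langle{\bf x}\rangle^N = K[t]_g[{\bf x}]/\langle{\bf x}\rangle^N$, which is a finitely generated free $K[t]_g$-algebra, so $M_g$ is a finitely generated $K[t]_g$-module and generic freeness (shrink $g$ once more) applies; then the fibre dimension is literally constant on $U$, and the inequality against $\dim_K M(o)$ is automatic since $o$ may or may not be a specialization point inside the final $U$ — so the cleanest route is just to invoke upper semicontinuity of $t_0 \mapsto \dim_{\kappa(t_0)} M_g\otimes\kappa(t_0)$ directly and note it is $\le$ its value at $o$ for $t_0$ near $o$. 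I would present the argument in that order: (1) reduce to finite length at $o$; (2) produce the relations and the polynomial $g$; (3) deduce finite length of all nearby fibres; (4) invoke generic freeness / semicontinuity for the now finitely generated $K[t]_g$-module to get the dimension inequality.
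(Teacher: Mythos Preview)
Your overall strategy---reduce to a finitely generated $K[t]$-module and invoke ordinary upper semicontinuity of fibre dimension---is the right one and is also what the paper does; the problem is in how you carry out the reduction. In step~(3), specializing the relation $x_i^{N}\mu_j=(t-o)m_{ij}+\sum_\ell p_{ij\ell}\mu_\ell$ at $t=t_0\neq o$ gives $x_i^{N}\bar\mu_j=(t_0-o)\bar m_{ij}+\sum_\ell p_{ij\ell}(t_0,{\bf x})\bar\mu_\ell$ in $M(t_0)$; the term $(t_0-o)\bar m_{ij}$ is a nonzero scalar times an \emph{arbitrary} element of $M(t_0)$ and has no reason to lie in $\langle{\bf x}\rangle M(t_0)$, so the claimed inclusion $\langle{\bf x}\rangle^N M(t_0)\subseteq\langle{\bf x}\rangle^{N+1}M(t_0)$ does not follow. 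More seriously, the assertion in step~(4) that after shrinking $g$ one has $\langle{\bf x}\rangle^N M_g=0$---so that $M_g$ becomes finitely generated over $K[t]_g$---is false in general. Take $P=K[t][[x]]$ and $M=P/\langle t-x\rangle$: then $M(0)=K$ and $M(t_0)=0$ for $t_0\neq 0$, yet under the identification $M\cong K[[x]]$ (with $t$ acting as multiplication by $x$) one has $\annihilator_{K[t]}M=0$, and for any $g\in K[t]$ with $g(0)\neq 0$ the element $g(x)$ is already a unit in $K[[x]]$, so $M_g\cong K[[x]]$ is neither killed by any power of $x$ nor finitely generated over $K[t]_g$.

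The missing idea is that the unwanted part of $M$ cannot be removed by \emph{localizing} in $t$; one must pass to a \emph{quotient}. The paper takes a primary decomposition $\annihilator_P M=\bigcap_i Q_i$ and uses $\dim M_{\mathfrak m_o}\le 1$ to show that at most one component $Q$ lies inside $\langle{\bf x}\rangle$, necessarily with $\sqrt{Q}=\langle{\bf x}\rangle$. Then $\bar M:=M/QM$ is a module over $P/Q$, and since $\langle{\bf x}\rangle^N\subset Q$ for some $N$, the ring $P/Q$ is finite over $K[t]$; thus $\bar M$ is genuinely a finitely generated $K[t]$-module and semicontinuity via a presentation matrix applies directly to $\bar M$. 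The remaining components $Q_i$ have nonzero image $\bar Q_i$ in $K[t]$, and on the open set where those do not vanish one has $M_{\mathfrak m_{t_0}}=\bar M_{\mathfrak m_{t_0}}$, yielding $\dim_K M(t_0)=\dim_K\bar M(t_0)\le\dim_K\bar M(0)\le\dim_K M(0)$. In the example above there is no such $Q$, and one is in the paper's Case~1.
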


\begin{proof}
	Without loss of generality we may assume $o=0$ and that $\dim_K M(0)<\infty.$ Then $M$ is quasi-finite but in general not finite over $K[t]$.  However, we show that the restriction of $M$ to some open subset of $\A^1$ is in fact finitely generated  over $K[t]$ (\textit {Step 4}), so that we can apply the semicontinuity of the rank of the presentation matrix of $M$ as $K[t]$--module.
	 The first steps in the proof are used to show that we can reduce to this case. We may assume that $K$ is infinite, since otherwise $\{0\}$ is open and the statement is trivially true.

Consider a primary decomposition of $\annihilator_P(M)$,
	\[\annihilator_P(M)=\mathop\cap\limits_{i=1}^rQ_i\subset P.\]
For all $i=1,\ldots, r$ let $\bar{Q_i}$ denote the image of $Q_i$ under the morphism $P\to P/\langle x_1,\ldots, x_s\rangle\cong K[t]$.

\noindent{\bf\textit {Case 1:}} $\variety(\langle x_1,\ldots, x_s\rangle)\not\subset \variety(Q_i)$ for all $i=1,\ldots,r$ in $\spectrum(P)$, i.e. $Q_i\not\subset\langle x_1,\ldots, x_s\rangle$ for all $i=1,\ldots,r$. In this case $\bar{Q_i}=\langle f_i\rangle\subset K[t]$ for some $f_i \neq 0$. Since
	\[U:=\A^1\smallsetminus\mathop\cup\limits_{i=1}^r\variety(f_i)\ne\emptyset,\]
for all $t_0\in U$ we have  
	$M_{\mathfrak{m}_{t_0}}=0$ and hence $\dim_K M(t_0)=0\le \dim_KM(0).$ 
	
\noindent{\bf\textit {Case 2:}} $\variety(\langle x_1,\ldots, x_s\rangle)\subset \variety(Q_i)$ for some $i\in\{1,\ldots,r\}$, i.e. $Q_i\subset\langle x_1,\ldots, x_s\rangle$.
	
	\textit{Step 1:} For $Q_i\subset\langle x_1,\ldots, x_s\rangle$ we have that $Q_i$ is unique and
\begin{align*}
	\sqrt{Q_i}=\langle x_1,\ldots, x_s\rangle. \tag{3.1}\label{21}
\end{align*}
	Indeed, we have
	$\dim_K M_{\mathfrak{m}_0}/\langle t\rangle\cdot M_{\mathfrak{m}_0}=\dim_KM(0)<\infty$, and $M_{\mathfrak{m}_0}$ is finitely generated over the local ring $P_{\mathfrak{m}_0}$. Therefore, by Krull's principal ideal theorem (\cite [Theorem B.2.1]{GLS07}) $\dim (M_{\mathfrak{m}_0})\le 1,$  which implies
	$\dim P_{\mathfrak{m}_0}/(\sqrt{ Q_{i}})_{\mathfrak{m}_0}=1.$
Therefore
	\[\sqrt{ Q_{i}}=\langle x_1,\ldots, x_s\rangle\]
and $\sqrt{ Q_{i}}$ is a minimal associated prime of $\annihilator_P M$. Hence $Q_i$ is unique.
	
	{\textit {Step 2:}}  Let, by \textit{Step 1}, $Q$
	be the only primary component of $\annihilator_PM$ contained in $\langle x_1,\ldots, x_s\rangle$. We set  
	$$\bar M := M/Q \cdot M$$
and  $\bar M(t_0):= \bar M\mathop\otimes\limits_{K[t]}K[t]/\langle t-t_0\rangle$ for $t_0\in K$.
Then, for $t_0=0$ we have
	\begin{align*}
	\dim_K \bar M(0)\le \dim_K M(0)\tag{3.2}\label{2}.
	\end{align*}
	
{\textit{Step 3:}}  Set 
	$W:=\variety(\bar Q)\smallsetminus (\mathop\cup\limits_{Q_i\ne Q}\variety(\bar{Q_i})) =\A^1\smallsetminus (\mathop\cup\limits_{Q_i\ne Q}\variety(\bar{Q_i}))\ne\emptyset.$	
We claim that for $t_0\in W$ 
	\begin{align*}
	\dim_K\bar M(t_0)=\dim_KM(t_0)\tag{3.3}\label{4}.
	\end{align*}
	
In fact, since $\mathfrak{m}_{t_0}\in \variety(Q)\smallsetminus \mathop\cup\limits_{Q_i\ne Q}\variety(Q_i)$,
	it follows that $(Q\cdot M)_{\mathfrak{m}_{t_0}}=0.$
	Hence, there is a $P$-module homomorphism
	$\varphi: \bar M_{\mathfrak{m}_{t_0}}\cong M_{\mathfrak{m}_{t_0}}/(Q\cdot M)_{\mathfrak{m}_{t_0}}=M_{\mathfrak{m}_{t_0}}$ with  
	$\varphi\left(\langle t-t_0\rangle\cdot \bar M_{\mathfrak{m}_{t_0}}\right)=\langle t-t_0\rangle\cdot M_{\mathfrak{m}_{t_0}}$, implying the claim.
	
{\textit{Step 4:}} 
	By \eqref {21} $\sqrt{Q}=\langle x_1,\ldots, x_s\rangle$, hence $P/Q$ is a finitely generated $K[t]$-module. Thus  $M/Q\cdot M$ is a finitely generated $K[t]$-module, having a presentation
	\[K[t]^m\stackrel{\psi(t)}\longrightarrow K[t]^n\to \bar M\to 0.\]
    This implies for $t_0\in K$ the exact sequence 
	$K^m\stackrel{\psi(t_0)}\longrightarrow K^n\to \bar M(t_0)\to 0,$
	which yields
	$\dim_K\bar M(t_0)=n-\rank \psi(t_0).$
	Since $\rank \psi(t)$ is lower semi-continuous on $\A^1$, there is an open neighborhood $V$ of $0$ in $\A^1$ such that for all $t_0\in V$ we get the inequality
	\begin{align*}
	\dim_K\bar M(t_0)\le \dim_K\bar M(0).\tag{3.4}\label{5}
	\end{align*}
	
With $U=W\cap V$ the assertion of the proposition follows from \eqref{4}, \eqref{5}, and \eqref{2}.
\end{proof}

We prove now our main result of this section.
\begin{Theorem}\label{column matrix}
	Let $A=[f_1 \ldots f_m]^T\in  \mathfrak{m}\cdot M_{m,1}$,  $m \geq 1$. For $1<m<s$ assume $K$ to be infinite. Then the following are equivalent:
	\begin{enumerate}
		\item $A$ is finitely $G$-determined.
		\item $\dim_K\left(M_{m,1}\big/\tilde T^e_A(GA)\right)=:d_e<\infty.$
	\end{enumerate}
	$K$ infinite is not needed for 2 $\ \Rightarrow$ 1. Moreover, if condition 2 is satisfied then $A$ is $G$ $(2d_e-\order(A)+2)$-determined.
\end{Theorem}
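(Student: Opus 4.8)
The plan is to prove the two implications separately, with the direction $2 \Rightarrow 1$ being essentially a reformulation of Theorem \ref{GP16} and Lemma \ref{isolated and presentation matrix}, and the direction $1 \Rightarrow 2$ being the substantial one. For $2 \Rightarrow 1$: assume $d_e < \infty$. By Theorem \ref{GP16}, part (1), it suffices to produce an integer $k$ with $\mathfrak{m}^{k+2} M_{m,1} \subset \mathfrak{m} \tilde T_A(GA)$, and indeed $d_e < \infty$ forces $\tilde T^e_A(GA)$, hence $\tilde T_A(GA)$, to have finite $K$-codimension in $M_{m,1}$, so such a $k$ exists. To get the explicit bound $2d_e - \order(A) + 2$, I would use the finite-codimension estimate: since $\dim_K(M_{m,1}/\tilde T^e_A(GA)) = d_e$, Nakayama-type reasoning gives $\mathfrak{m}^{d_e} M_{m,1} \subset \tilde T^e_A(GA)$, and then $\mathfrak{m}^{d_e+1} M_{m,1} \subset \mathfrak{m}\,\tilde T^e_A(GA) \subset \tilde T_A(GA)$ up to the usual manipulation relating $\tilde T^e_A$ and $\tilde T_A$ (the ideal part contributes to $\tilde T_A$ directly, the Jacobian part picks up one factor of $\mathfrak{m}$). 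Taking $k = d_e - 1$ in \eqref{main} yields $G$ $(2d_e - \order(A) + 2)$-determinacy. This direction needs no hypothesis on $K$.

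For $1 \Rightarrow 2$: this is where the genuine work lies, and where $K$ infinite (for $1 < m < s$) enters. Suppose $A$ is finitely $G$-determined but, for contradiction, $d_e = \infty$, equivalently (by Lemma \ref{isolated and presentation matrix}, using $m \le s$) $\dim_K\bigl(R/(I_1(A) + I_m(Jac(A)))\bigr) = \infty$, i.e. $V(I_1(A) + I_m(Jac(A)))$ has positive dimension at $0$. By Theorem \ref{height}, part (1), finite determinacy already forces $\height I_m(A) = \height I_1(A) = \min\{s, m\} = m$ and more relevantly $\height I_t(A) = \min\{s, m_t\}$ for all $t$; in particular the entries of $A$ form part of a system of parameters. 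The strategy is: replace $A$ by a jet $A_0 = jet_l(A)$ (with $l$ a determinacy bound), which is $G$-equivalent to $A$ and has polynomial entries, and then build a one-parameter deformation $A_t$ of $A_0$ by adding, as in Proposition \ref{example isolated}, high-order terms $t \cdot (\sum_k c_{ik} x_k^N)$ so that for $t \ne 0$ (generic) the deformed matrix $A_t$ defines an isolated singularity: $\dim_K(R/(I_1(A_t) + I_m(Jac(A_t)))) < \infty$. Here Proposition \ref{example isolated} and the choice of generic $c_{ik}$ (possible since $K$ is infinite, or $m = 1$) are used. Since $A$ is $G$ $l$-determined and the added terms have order $> l$, each $A_t$ is $G$-equivalent to $A_0 \sim A$; in particular $\dim_K(M_{m,1}/\tilde T^e_{A_t}(GA_t))$ is the same for all $t$ — it equals $d_e$, which we assumed infinite. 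But Proposition \ref{semi-continuity}, applied to the $K[t][[\mathbf{x}]]$-module $M_{m,1}/\tilde T^e_{A_{(\cdot)}}(G A_{(\cdot)})$ (whose fibre at $t_0$ is $M_{m,1}/\tilde T^e_{A_{t_0}}(GA_{t_0})$), gives $\dim_K(\text{fibre at } t_0) \le \dim_K(\text{fibre at } 0)$ for $t_0$ near $0$ — the wrong direction if the special fibre is infinite. So instead I run the semicontinuity the other way: take $o$ to be a generic parameter value $t_0 \ne 0$ where the fibre is finite-dimensional (exists by Proposition \ref{example isolated}), and conclude that the fibre at all nearby $t$ — in particular at infinitely many $t$, hence by the $G$-invariance at $t = 0$ too after a change of base point, i.e. using that the generic and special fibres of a family over an irreducible base are comparable — is finite. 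More carefully: the set of $t$ with finite fibre is open (by Proposition \ref{semi-continuity} the dimension function is upper semicontinuous, and "$< \infty$" is the complement of where it is $+\infty$, which is closed), it is non-empty (Proposition \ref{example isolated}), so it is a dense open subset of $\A^1$; but all fibres are isomorphic as $K$-vector spaces by $G$-invariance, so either all are finite or all infinite — the dense open set forces all finite, contradicting $d_e = \infty$. Hence $d_e < \infty$.

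The main obstacle, and the point requiring care, is justifying that the deformed matrices $A_t$ are all $G$-equivalent — equivalently that $\dim_K(M_{m,1}/\tilde T^e_{A_t}(GA_t))$ is constant in $t$. The subtlety is that finite $G$-determinacy with a uniform bound $l$ is what licenses the claim "changing $A_0$ by terms of order $> l$ does not change the $G$-class"; one must ensure the deformation stays within $\mathfrak{m}^{l+1} M_{m,1}$ fibrewise, which is why $N > l$ is chosen, and one must know $A_0 \sim A$ which is immediate from $l$-determinacy. A second delicate point is the compatibility of Lemma \ref{isolated and presentation matrix} (stated for a single matrix) with the parametrized setting: one needs that the presentation matrix $\Theta_{(G, A_t)}$ depends polynomially (or at least $K[t]$-linearly in the relevant sense) on $t$ so that the cokernels fit together into a finitely generated $K[t][[\mathbf{x}]]$-module to which Proposition \ref{semi-continuity} applies; this is clear from the explicit block form of $\Theta$. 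Finally, the case $m \ge s$ (and $m = 1$, $s$ arbitrary) must be handled: when $m = 1$ the genericity in Proposition \ref{example isolated} is automatic over any $K$, and when $m \ge s$ Lemma \ref{isolated and presentation matrix} does not apply verbatim (it assumed $m \le s$), so one either reduces to the $m \le s$ case or argues directly using Theorem \ref{height}, part (2)(i), that finite determinacy already forces the entries to be a regular sequence, from which $d_e < \infty$ follows by a direct Koszul-type computation. The determinacy bound $2d_e - \order(A) + 2$ then comes purely from the $2 \Rightarrow 1$ analysis above.
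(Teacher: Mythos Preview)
Your argument for $1 \Rightarrow 2$ contains a genuine gap in the choice of deformation. You set $A_t = A_0 + tB$ with the ``bad'' matrix $A_0$ at $t=0$, and then need some fibre with finite $d_e$ to get a contradiction. You appeal to Proposition~\ref{example isolated}, but that proposition only shows that the \emph{pure} matrix $B$ has finite $d_e$; in your parametrization, pure $B$ sits at $t=\infty$, not at any point of $\A^1$. Worse, since $A$ is $l$-determined and $\order(tB) = N > l$, \emph{every} $A_t$ for $t \in \A^1$ is $G$-equivalent to $A$, so every fibre has $d_e(A_t) = d_e(A)$. Your deformation is therefore constant on the invariant you care about, and the argument is circular: you cannot exhibit a single fibre with finite $d_e$ without already knowing $d_e(A) < \infty$.

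The paper's proof avoids this by reversing the roles: take $B_t = B + tA$, so that the \emph{good} matrix $B$ sits at $t=0$. Proposition~\ref{example isolated} gives $\dim_K(R/Q_0) < \infty$ directly at the origin, and then Proposition~\ref{semi-continuity} (applied at $o = 0$) yields $\dim_K(R/Q_{t_0}) < \infty$ for $t_0$ in a neighbourhood of $0$. Picking any $t_0 \neq 0$ in this neighbourhood, one has $B_{t_0} = t_0(A + t_0^{-1}B) \mathop\sim\limits^G A + t_0^{-1}B \mathop\sim\limits^G A$ (the last step by $k$-determinacy, since $\order(t_0^{-1}B) = N > k$), and hence $d_e(A) = d_e(B_{t_0}) < \infty$. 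The point is that the $t=0$ fibre is \emph{not} $G$-equivalent to $A$, which is precisely what allows semicontinuity to inject new information.

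A minor secondary issue: for $m \geq s$ you invoke Theorem~\ref{height}.2(i), but that part requires $s \geq mn = m$, the opposite inequality. The relevant statement is Theorem~\ref{height}.2(ii): $I_1(A)$ is $\mathfrak m$-primary, so $I_1(A)\cdot M_{m,1} \subset \tilde T^e_A(GA)$ already has finite $K$-codimension. No Koszul computation is needed.
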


\begin{proof}
	2. $\Rightarrow$ 1.  is a consequence of Theorem \ref{GP16}, as well as 1. $ \Rightarrow$ 2. for char($K$)= 0. 
	
	Let $\characteristic(K)=p>0$ and assume that $A$ is $G$ $k$-determined.\\
{\bf{\textit {Case 1: $m < s$}}}.
	By finite determinacy we may assume that $A=[f_1 \ldots f_m]^T$ is a matrix of polynomials.  Let $N\in \N$ be such that $N>k$ and $p\nmid N$. Let $B=[g_1 \ldots g_m]^T\in Mat(m,1,\mathfrak{m})$, where for $i=1,\ldots,m$,
	\[g_i=c_{i1}x^N_1+c_{i2}x^N_2+\cdots+c_{is}x^N_s\]
	and $c_{ij}\in K$ as in Proposition \ref{example isolated}  (which is possible by assumption). Consider 
	\[B_t=B+tA=[g_1+tf_1\hskip 5pt\ldots\hskip5pt g_m+tf_m]^T\in Mat(m,1, K[t][{\bf x}]).\]
	Let $Q_t$ be the ideal of $K[t][[{\bf x}]]$ generated by the entries of $B_t$ and all  $m\times m$ minors of the matrix $\left[\frac{\partial(g_i+tf_i)}{\partial x_j}\right]$. Then by Proposition \ref{semi-continuity}, there exists a neighborhood $U\subset \A^1_K$ of $0$ such that for all $t_0\in U$,
	\[\dim_K\left(K[[{\bf x}]]/Q_{t_0}\right)\le\dim_K\left(K[[{\bf x}]]/Q_0\right)<\infty,\]
	where the second inequality follows from the proof of Proposition \ref{example isolated}.
	By Lemma \ref{isolated and presentation matrix} we get that also
	$\dim_K\left(M_{m,1}\Big/\tilde T^e_{B_{t_0}}(GB_{t_0})\right)<\infty$.
	Let $t_0\in U$, $t_0\ne 0$. Since $B_{t_0}\mathop\sim\limits^{G}t_0A\mathop\sim\limits^{G}A$ we obtain 
	\[\dim_K\left(M_{m,1}\Big/\tilde T^e_{ A}(G A)\right)=\dim_K\left(M_{m,1}\Big/\tilde T^e_{B_{t_0}}(GB_{t_0})\right)<\infty.\]
{\bf{\textit {Case 2: $m \ge s$}}}. By Theorem \ref{height}.\ref{height 2} (ii),  $I=\langle f_1,\ldots, f_m\rangle $ is $\mathfrak m$-primary and since  $IR^m \subset \tilde T^e_{ A}(G A)$ the claim follows.
\end{proof}

The following corollary shows a criterion for finite $G$-determinacy of a column matrix, which does not use $R-$modules but only ideals in $R =K[[x_1,\ldots,x_s]]$. In addition, another determinacy bound is provided.

\begin{Corollary}\label{main corollary}
		Let  $A=[a_1 \ldots a_m]^T\in Mat(m, 1, \mathfrak{m})$.
		\begin{enumerate}
		 \item\label{main 1} If $m\ge s$
		 then $A$ is finitely $G$-determined if and only if there is some integer $k\ge 0$ such that
		 \begin{align*}
		 \mathfrak{m}\cdot\langle a_1,\ldots, a_m\rangle\supset \mathfrak{m}^{k+2},
		 \end{align*}
		 i.e. $I_1(A)$ is primary. $A$ is then $(2k-\order(A)+2)$-determined.
		 \item \label{main 2} Let $m\le s$ and for $m >1 $ assume that $K$ is infinite. Then $A$ is finitely $G$-determined if and only if there is some integer $k\ge 0$ such that
		\begin{align*}
		\langle a_1,\ldots, a_m\rangle + I_m(Jac(A))\supset \mathfrak{m}^k.
		\end{align*}
		Furthermore, $A$ is then $G$ $(2km-\order(A)+2)$-determined.
		\end{enumerate}
\end{Corollary}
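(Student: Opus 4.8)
The strategy is to reduce both statements to Theorem \ref{column matrix}, translating the condition ``$\dim_K(M_{m,1}/\tilde T^e_A(GA)) < \infty$'' into the stated ideal-theoretic conditions, and then to extract the determinacy bounds by comparing $d_e$ with the colength of the relevant ideal. For part \eqref{main 1} ($m \geq s$): here $\tilde T^e_A(GA) = I_1(A)\cdot R^m + (\text{columns of }Jac(A))$, but since $m \geq s$ the Jacobian contribution is dominated, and the key point (already implicit in the proof of Theorem \ref{column matrix}, Case 2) is that finite $G$-determinacy is equivalent to $I_1(A)$ being $\mathfrak m$-primary, which in turn is equivalent to $\mathfrak m^{k+1} \subset I_1(A)$, i.e. $\mathfrak m^{k+2} \subset \mathfrak m \cdot I_1(A)$, for some $k$. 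First I would note that $IR^m \subset \tilde T^e_A(GA) \subset R^m$ forces $\dim_K(M_{m,1}/\tilde T^e_A(GA)) \leq m \cdot \dim_K(R/I)$, so finiteness of $d_e$ is equivalent to $I_1(A)$ being $\mathfrak m$-primary; then apply Theorem \ref{column matrix}. For the bound, if $\mathfrak m^{k+2} \subset \mathfrak m I_1(A) \subset \mathfrak m \tilde T_A(GA)$ then condition \eqref{main} of Theorem \ref{GP16} holds with that $k$, giving $G$ $(2k-\order(A)+2)$-determinacy directly from Theorem \ref{GP16}.1.

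For part \eqref{main 2} ($m \leq s$): by Lemma \ref{isolated and presentation matrix}, $\dim_K(M_{m,1}/\tilde T^e_A(GA)) < \infty$ if and only if $\dim_K\big(R/(I_1(A) + I_m(Jac(A)))\big) < \infty$, which (since $R$ is local) is equivalent to $I_1(A) + I_m(Jac(A)) \supset \mathfrak m^k$ for some $k$. Combined with Theorem \ref{column matrix}, this gives the stated equivalence. For the determinacy bound I would argue as follows: set $J := I_1(A) + I_m(Jac(A))$ and suppose $\mathfrak m^k \subset J$. One needs to produce an explicit $k'$ with $\mathfrak m^{k'+2} M_{m,1} \subset \mathfrak m \tilde T_A(GA)$ so that Theorem \ref{GP16}.1 applies. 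The natural estimate uses that $J^m = (I_1(A) + I_m(Jac(A)))^m \subset I_1(A) + I_1(A)^{m-1} I_m(Jac(A)) + \cdots$, and after multiplying the $I_m(Jac(A))$-part by the appropriate entries one lands inside $\tilde T^e_A(GA)$ — the combinatorics should yield $\mathfrak m^{km} \subset \langle a_1,\ldots,a_m\rangle + (\text{submodule spanned by Jacobian columns})$ at the level of $R^m$, hence $\mathfrak m^{km+1} M_{m,1} \subset \mathfrak m \tilde T_A(GA)$, i.e. $k' = km - 1$, producing the bound $2(km-1) - \order(A) + 2 = 2km - \order(A)$... (one checks the precise exponent; the paper states $2km - \order(A) + 2$, so the honest bound after bookkeeping is $\mathfrak m^{km+1} \subset \sqrt{\phantom{x}}$-type containment giving $k' = km$).

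The main obstacle is the passage from the \emph{radical} statement in Lemma \ref{isolated and presentation matrix} — which only controls $\sqrt{I_m(\Theta_{(G,A)})}$ and $\sqrt{I_1(A) + I_m(Jac(A))}$ — to an \emph{effective} containment $\mathfrak m^{k'} \subset I_m(\Theta_{(G,A)})$ with $k'$ expressed through $k$, because radicals destroy colength bounds. I would handle this by working directly with the explicit two-block presentation matrix $\Theta_{(G,A)}$ displayed in the proof of Lemma \ref{isolated and presentation matrix}: expanding an $m\times m$ minor of $\Theta_{(G,A)}$ along a choice of $j$ columns from the right block and $m-j$ columns from $Jac(A)$ shows that every element of $(I_1(A))^j \cdot I_m\big(\text{$(m-j)$-row selections of } Jac(A)\big)$ type lies in $I_m(\Theta_{(G,A)})$, and summing over $j$ together with the binomial-style identity $(I+I')^{km} \subset \sum_j I^j (I')^{km-j}$ and the hypothesis $\mathfrak m^k \subset I + I'$ gives $\mathfrak m^{km} \subset I_1(A) + I_m(Jac(A))$-refined-to-$I_m(\Theta_{(G,A)})$, hence $\mathfrak m^{km+1} M_{m,1} \subset \mathfrak m \tilde T_A(GA)$. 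Plugging $k' = km$ into Theorem \ref{GP16}.1 yields the claimed $(2km - \order(A) + 2)$-determinacy.
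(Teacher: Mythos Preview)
Your approach is essentially the paper's own, and the equivalence parts are handled the same way (Theorem~\ref{height}.\ref{height 2}(ii) for necessity in part~\ref{main 1}, Lemma~\ref{isolated and presentation matrix} together with Theorem~\ref{column matrix} for part~\ref{main 2}). Two points deserve tightening. First, in part~\ref{main 1} your claim that ``finiteness of $d_e$ is equivalent to $I_1(A)$ being $\mathfrak m$-primary'' is only half justified by the inequality $d_e\le m\cdot\dim_K(R/I)$; the implication $d_e<\infty\Rightarrow I_1(A)$ primary does \emph{not} follow from that inequality. The paper (and the Case~2 proof of Theorem~\ref{column matrix} that you cite) gets necessity directly from Theorem~\ref{height}.\ref{height 2}(ii), bypassing this issue; you should do the same rather than route through $d_e$. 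Second, for the determinacy bound in part~\ref{main 2} the paper's argument is much cleaner than your minor-by-minor expansion: from $\mathfrak m^k\subset I+I_m(Jac(A))$ one gets $\mathfrak m^{km}\subset (I+I_m(Jac(A)))^m\subset I^m+I_m(Jac(A))\subset I_m(\Theta_{(G,A)})$, the last inclusion because the $m\times m$ minors of the right block of $\Theta_{(G,A)}$ generate exactly $I^m$ and $Jac(A)$ sits as a block inside $\Theta_{(G,A)}$ (this is already noted in the proof of Lemma~\ref{isolated and presentation matrix}); then one invokes \cite[Proposition~4.2]{GP16}, which converts $\mathfrak m^{km}\subset I_m(\Theta_{(G,A)})$ into the stated bound. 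Your ``binomial-style'' detour and the hesitation about exponents are unnecessary once you observe $(I+I')^m\subset I^m+I'$ for any two ideals.
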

\begin{proof}

1. 	If $A$ is finitely $G$-determined, Theorem \ref{height}.\ref{height 2}\ (ii) implies the inclusion $ ``\supset"$. If  $``\supset"$ holds, we have
	\[\mathfrak{m}^{k+2}\cdot M_{m,1}\subset \mathfrak{m}\cdot I_1(A)\cdot M_{m,1}+\mathfrak{m}^{2}\cdot\left\langle\frac{\partial A}{\partial x_1},\ldots,\frac{\partial A}{\partial x_s} \right\rangle= \mathfrak{m}\cdot \tilde T_A(GA).\] 
	By Theorem \ref{GP16},  $A$ is $G$ $(2k-\order(A)+2)$-determined.
	
2. The characterization of finite determinacy follows from Theorem \ref{column matrix} and Lemma \ref{isolated and presentation matrix}. 
	For the determinacy bound, we have $$\mathfrak{m}^{km}\subset\langle a_1,\ldots, a_m\rangle^m + I_m(Jac(A))\subset I_m(\Theta_{(G,A)}),$$ 
	where the second inclusion follows from the proof of Lemma \ref{isolated and presentation matrix}. By \cite[Proposition 4.2]{GP16} we obtain the result.
	\end{proof}
% % % % % % % % % % % % % % % % % % % % % % % % % % % % % % % % % % % % %
\section{Contact equivalence of ideals} \label{complete intersections}

By associating to a matrix $A=[a_1 \ldots a_m]^T\in M_{m,1}$  the ideal $I=\langle a_1,\ldots, a_m\rangle\subset R$ generated by the entries of $A$, the results of the previous section about $G$-equivalence for matrices in $M_{m,1}$ can easily be transferred to contact equivalence for ideals in $R$. The minimal number of generators of $I$ is denoted by $\mng(I)$.
%We show below that an ideal $I$ is contact $k$-determined if and only if the matrix  $A$ corresponding to a minimal set of generators of $I$ is $G$ $k$-determined. 
Recall that two ideals $I$ and $J$ of $R$ are called {\textit {contact equivalent}}, denoted $I\mathop  \sim \limits^c J$, if $R/I \cong R/J$ as local $K$-algebras. 

\begin{Definition} \label{determined}
Let  $I$ be a proper ideal of $R$ and  $a_1, \ldots, a_m$ a minimal set of generators of $I$.
$I$ is called {\bf{contact $k$-determined}} if for every ideal $J$ of $R$ that can be generated by $m$ elements $b_1, \ldots, b_m$ with $b_i-a_i\in\mathfrak{m}^{k+1}$ for $i=1,\ldots,m$, we have $I\mathop\sim\limits^c J$. 
$I$ is called {\bf {finitely contact determined}} if $I$ is contact $k$-determined for some $k$. 

\end{Definition}

The notion of contact determinacy  is independent of the minimal set of generators of $I$ and hence an intrinsic property of $I$. In fact, let $I$ be contact $k$-determined w.r.t. $a_1, \ldots, a_m$ and let
$a'_1, \ldots, a'_m$ be another minimal set of generators of $I$ and denote by $A$ resp. $A'$ are the corresponding column matrices. If an ideal $J$ can be generated by $b_1, \ldots, b_m$, satisfying $b_i - a'_i \in\mathfrak{m}^{k+1}$, we have to show that $I\mathop\sim\limits^c J$:
By Lemma \ref {Mather 1} below, $A=U \cdot A'$ with $U\in GL(m, R)$. Let $b'_i$ be the entries of $B' = U \cdot B$, $B$ the matrix with entries $b_i$ then $ B'-A  = U\cdot (B - A') \in \mathfrak{m}^{k+1}M_{m,1}$ and $J$ is generated by $b'_1, \ldots, b'_m$ with $b'_i - a_i \in\mathfrak{m}^{k+1}$. Hence $I\mathop\sim\limits^c J$ and $I$ is contact $k$-determined w.r.t. $a'_1, \ldots, a'_m$.

%then $J$ can also be generated by $b_1, \ldots, b_m$ with $b_i - a_i \in\mathfrak{m}^{k+1}$. Just let 
% $b_i$ be the entries of $B = U \cdot B'$, $B'$ the matrix with entries $b'_i$, then $ B-A  = U\cdot (B' - A') \in \mathfrak{m}^{k+1}M_{m,1}$. 

%%$I$ is called {\textit {generator k-determined}}, if for every ideal $J$ of $R$ with $I+\mathfrak{m}^{k+1} =J+\mathfrak{m}^{k+1}$ we have $I\mathop\sim\limits^c J$.

The following lemma is proved in \cite[2.3 Proposition]{Mat68} for map-germs, but the proof works also in our more general situation.

\begin{Lemma}\label{Mather 1}
	 Let  $I$ be an ideal of a unital Noetherian local ring $Q$ generated by two lists of $m$ elements, say $\langle a_1,\ldots, a_m\rangle$ and $\langle b_1\ldots, b_m\rangle$. Let $A=[a_1\hskip 3pt a_2\hskip 3pt\ldots\hskip 3pt a_m]^T$ and $B=[b_1\hskip 3pt b_2\hskip 3pt\ldots\hskip 3pt b_m]^T$ be the corresponding column matrices. Then there is an invertible matrix $U\in GL(m, Q)$ such that $B=U\cdot A$.
\end{Lemma}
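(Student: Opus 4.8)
The plan is to prove Lemma \ref{Mather 1} by a direct matrix-completion argument, essentially because the two generating systems $a_1,\dots,a_m$ and $b_1,\dots,b_m$ of the same ideal $I$ each express the other by an $m\times m$ matrix over $Q$, and these two matrices must be mutually inverse modulo the relations, which can then be corrected to honest inverses over the local ring.

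First I would write, since both lists generate $I$, relations $b_i=\sum_j p_{ij}a_j$ and $a_i=\sum_j q_{ij}b_j$ for suitable $p_{ij},q_{ij}\in Q$; in matrix form $B=PA$ and $A=QB$ with $P=[p_{ij}]$, $Q=[q_{ij}]$. Then $A=QPA$, i.e. $(\mathrm{Id}-QP)A=0$, so every row of $\mathrm{Id}-QP$ is a relation among $a_1,\dots,a_m$. The key point is to use that this does not force $QP=\mathrm{Id}$ on the nose, but it does force $QP$ to be invertible: I would argue that $\mathrm{Id}-QP$ has all entries in $Q$ and, reducing modulo the maximal ideal $\mathfrak n$ of $Q$, the images $\bar a_i$ still generate $I/\mathfrak n I$, and if the list $a_1,\dots,a_m$ is chosen minimal the $\bar a_i$ form a basis, so $\overline{\mathrm{Id}-QP}$ annihilates a basis, forcing $\overline{QP}=\mathrm{Id}$ over $Q/\mathfrak n$. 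Hence $QP\equiv\mathrm{Id}\bmod\mathfrak n$, so $\det(QP)$ is a unit and $QP\in GL(m,Q)$; symmetrically $PQ\in GL(m,Q)$, so $P$ itself is invertible. Setting $U:=P$ gives $B=UA$ with $U\in GL(m,Q)$.

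There is a subtlety to address: the lemma as stated does not explicitly require the generating lists to be minimal, only that $I$ is "generated by two lists of $m$ elements." If the lists are not minimal the reduction-mod-$\mathfrak n$ step above does not immediately give $\overline{QP}=\mathrm{Id}$. I would handle this by invoking Nakayama: $\mathrm{Id}-QP$ has image in $\mathrm{Hom}(Q^m,Q^m)$ killing $A$, and I would instead directly show $QP$ invertible by noting that the cokernel of the composite map $Q^m\xrightarrow{P}Q^m\xrightarrow{Q}Q^m$ and of each factor surject onto $I$ compatibly; more cleanly, I would simply cite the referenced argument of Mather (\cite[2.3 Proposition]{Mat68}), which treats exactly this, and remark that the only ingredients are that $Q$ is local Noetherian and that one may reduce to the minimal case by first passing to a minimal sublist — but the cleanest route, and the one I expect the authors intend, is: the matrix $\mathrm{Id}-QP$ has entries in $Q$, its rows are relations on the $a_i$, and using $\det(\mathrm{Id}-(\mathrm{Id}-QP))=\det(QP)$ together with the fact that a relation matrix on a generating set reduces to a singular matrix mod $\mathfrak n$ only if the generators are dependent — so in the minimal case one is done, and the non-minimal case follows by the usual trick of completing to square invertible matrices.

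I expect the main obstacle to be purely expository rather than mathematical: making the passage from "$B=PA$, $A=QB$" to "$P\in GL(m,Q)$" airtight without implicitly assuming minimality, and deciding how much of Mather's original argument to reproduce versus cite. Since the paper already appeals to \cite[2.3 Proposition]{Mat68} and only claims "the proof works also in our more general situation," the honest write-up is short: reproduce the relation matrices $P,Q$, observe $(\mathrm{Id}-QP)A=0$ and $(\mathrm{Id}-PQ)B=0$, invoke minimality of the generating system (which one may assume, or arrange) together with Nakayama to conclude $QP,PQ\in GL(m,Q)$, hence $P\in GL(m,Q)$, and set $U=P$. No heavy computation is needed; the content is entirely the local-ring determinant/Nakayama bookkeeping.
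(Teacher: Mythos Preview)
The paper gives no proof of this lemma: it simply cites \cite[2.3 Proposition]{Mat68} and asserts that Mather's argument carries over to an arbitrary Noetherian local ring. Your sketch is therefore already more detailed than what the paper provides, and your argument in the minimal case---write $B=PA$, $A=QB$, reduce the relation $(\mathrm{Id}-QP)A=0$ modulo $\mathfrak n$, use that the $\bar a_i$ form a $k$-basis of $I/\mathfrak n I$ by Nakayama to force $QP\equiv\mathrm{Id}\bmod\mathfrak n$, hence $\det P$ a unit---is correct and is essentially Mather's.

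Your discussion of the non-minimal case, though, is muddled: the sentence about ``a relation matrix on a generating set reduces to a singular matrix mod $\mathfrak n$ only if the generators are dependent'' does not parse into a usable statement, and ``the usual trick of completing to square invertible matrices'' is left as a gesture. The clean reduction you are reaching for is this: by elementary row operations over $Q$ (an element of $GL(m,Q)$) one may replace $A$ by $[a_1,\dots,a_r,0,\dots,0]^T$ with $a_1,\dots,a_r$ a minimal generating set ($r=\mng(I)$), and similarly for $B$; then apply the minimal case in size $r$ and extend by $\mathrm{Id}_{m-r}$. Alternatively, note that $U:=P+R(\mathrm{Id}-QP)$ satisfies $UA=B$ for \emph{any} $R$, and a residue-field computation lets one choose $R$ so that $\bar U$ is invertible. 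Either route confirms your instinct that the obstacle is purely expository; since the paper's own ``proof'' is a bare citation, your write-up is adequate once one of these two sentences is inserted.
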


Note that for 1--column matrices  
left-right equivalence (i.e.,  $G$-equivalence) and left equivalence coincide.

\begin{Proposition}{\label{contact equivalence}}
	Let $I$ and $J$ be proper ideals of $R$ given by the same number of $m$ generators
%	with $\mng(I) = \mng(J)$ 
	and let $A$ resp. $B$ be the corresponding matrices in $M_{m,1}$.
%	to a minimal set of generators of $I$ resp. $J$. 
	Then the following are equivalent:
	\begin{enumerate}
		\item $I\mathop  \sim \limits^c J$.
		\item  There is an automorphism $\phi\in Aut(R)$ such that $\phi(I)=J$.
		\item There are  $\phi\in Aut(R)$ and $U\in GL(m,R)$ such that $B=U\cdot \phi(A)$.
		\item $A\mathop\sim\limits^G B$.
	\end{enumerate}
	Moreover, if $m = \mng(I)$, then $I$ is contact $k$-determined iff $A$ is $G$ $k$-determined.
\end{Proposition}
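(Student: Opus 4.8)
The plan is to prove the equivalence of (1)--(4) in a cycle, and then derive the ``moreover'' statement about $k$-determinacy from the equivalence (1)$\Leftrightarrow$(4) together with Lemma \ref{Mather 1}. First I would show (4)$\Rightarrow$(3)$\Rightarrow$(2)$\Rightarrow$(1)$\Rightarrow$(4). The implication (4)$\Rightarrow$(3) is almost tautological: if $A\mathop\sim\limits^G B$ then $B = U\cdot\phi(A)\cdot V$ for some $U\in GL(m,R)$, $V\in GL(1,R)=R^{*}$, $\phi\in Aut(R)$; absorbing the unit $V$ into $U$ gives $B = U'\cdot\phi(A)$ with $U'=V\cdot U\in GL(m,R)$. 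The implication (3)$\Rightarrow$(2): the entries of $\phi(A)$ generate $\phi(I)$, and since $B=U\cdot\phi(A)$ with $U$ invertible, the entries of $B$ and of $\phi(A)$ generate the same ideal, so $J=\phi(I)$. The implication (2)$\Rightarrow$(1) is standard: an automorphism $\phi$ with $\phi(I)=J$ induces a $K$-algebra isomorphism $R/I\xrightarrow{\ \sim\ }R/J$.

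The only implication requiring a little work is (1)$\Rightarrow$(4), equivalently (1)$\Rightarrow$(2). Suppose $\psi\colon R/I\to R/J$ is an isomorphism of local $K$-algebras. Lift $\psi$ to a $K$-algebra endomorphism $\phi$ of $R$ by choosing, for each variable $x_i$, a preimage in $R$ of $\psi(\bar{x_i})$; this is possible because $R=K[[\mathbf x]]$ is free in the category of complete local $K$-algebras on the generators $x_1,\dots,x_s$ (sending $x_i$ to an element of $\mathfrak m$). Since $\psi$ is an isomorphism, $\phi$ induces an isomorphism on $\mathfrak m/\mathfrak m^{2}$ modulo the images of $I$ and $J$; a short argument using Nakayama and completeness shows $\phi$ is itself an automorphism of $R$ (its linear part is invertible because $\psi$ respects the maximal ideals and is bijective on cotangent spaces once one accounts for $I,J\subset\mathfrak m^{?}$ --- here one uses that $I,J$ are proper and the standard criterion that a $K$-endomorphism of $R$ is an automorphism iff its Jacobian at $0$ is invertible). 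Then $\phi(I)$ and $J$ have the same image in $R/J$, namely $0$, so $\phi(I)\subset J$; applying the same construction to $\psi^{-1}$ and composing shows $\phi(I)=J$, which is (2). Combining with (2)$\Rightarrow$(3)$\Rightarrow$(4) closes the cycle.

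For the final sentence, assume $m=\mng(I)$. If $A$ is $G$ $k$-determined and $J$ is any ideal generated by $b_1,\dots,b_m$ with $b_i-a_i\in\mathfrak m^{k+1}$, then the column matrix $B$ satisfies $B-A\in\mathfrak m^{k+1}M_{m,1}$, hence $B\mathop\sim\limits^G A$, so by (4)$\Rightarrow$(1) we get $I\mathop\sim\limits^c J$; thus $I$ is contact $k$-determined. Conversely, suppose $I$ is contact $k$-determined and let $B\in M_{m,1}$ with $B-A\in\mathfrak m^{k+1}M_{m,1}$; let $J$ be the ideal generated by the entries of $B$. Then $I\mathop\sim\limits^c J$, so by (1)$\Rightarrow$(4) we have $A\mathop\sim\limits^G B$ \emph{provided $B$ also has exactly $m$ generators as its minimal number}; but $\mng(J)\le m$, and since $B-A\in\mathfrak m^{k+1}M_{m,1}$ with $k\ge 0$ the matrix $B$ has the same $1$-jet (indeed the same $k$-jet, $k\ge 0$) of its $\mathfrak m/\mathfrak m^{2}$-image as $A$, forcing $\mng(J)=m$ as well; hence Proposition \ref{contact equivalence} applies and $B\mathop\sim\limits^G A$. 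Therefore $A$ is $G$ $k$-determined.

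The main obstacle I expect is the lifting step in (1)$\Rightarrow$(2): one must check carefully that the lifted endomorphism $\phi$ of $R$ is actually an automorphism, and that $\phi(I)=J$ rather than merely $\phi(I)\subseteq J$. The cleanest route is to lift both $\psi$ and $\psi^{-1}$ to endomorphisms $\phi,\phi'$ of $R$, observe that $\phi'\circ\phi$ and $\phi\circ\phi'$ induce the identity on $R/I$ and $R/J$ respectively and hence are automorphisms of $R$ (their linear parts differ from the identity by something landing in $I$ resp. $J$, which is contained in $\mathfrak m$, so the linear parts are still invertible), from which $\phi$ is an automorphism; equality $\phi(I)=J$ then follows because $\phi(I)\subseteq J$, $\phi'(J)\subseteq I$, so $I\subseteq\phi^{-1}(J)=\phi'(\text{unit})\cdot J\cdots$ --- more simply, $\phi(I)\subseteq J$ and applying $\phi^{-1}$ to $\phi'(J)\subseteq I$ gives $J\subseteq\phi(I)$. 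A secondary subtlety, already flagged in the proof of the ``moreover'' part, is verifying that the perturbed matrix $B$ still has $m$ as its minimal number of generators so that the proposition is applicable; this is immediate for $k\ge 0$ since then $B\equiv A\pmod{\mathfrak m\cdot M_{m,1}}$ already at the level of $1$-jets.
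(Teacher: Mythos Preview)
Your overall strategy matches the paper's: the paper proves (1)$\Leftrightarrow$(2) by citing the lifting lemma \cite[Lemma 1.23]{GLS07}, (2)$\Leftrightarrow$(3) via Lemma~\ref{Mather 1}, calls (3)$\Leftrightarrow$(4) obvious, and gets the ``moreover'' from (1)$\Leftrightarrow$(4) and the definitions. Your arguments for (4)$\Rightarrow$(3)$\Rightarrow$(2)$\Rightarrow$(1) are correct.

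There is, however, a genuine gap in your (1)$\Rightarrow$(2). The assertion that $\phi'\circ\phi$ is an automorphism of $R$ because it induces the identity on $R/I$ is false when $I\not\subset\mathfrak m^{2}$. Take $R=K[[x,y]]$, $I=J=\langle x\rangle$, $\psi=\mathrm{id}$ on $R/I\cong K[[y]]$, and lift both $\psi$ and $\psi^{-1}$ by $\phi=\phi'$ with $\phi(x)=x^{2}$, $\phi(y)=y$ (a perfectly legitimate choice of lift in your scheme, since $x^{2}\in J$). Then $\phi'\circ\phi$ sends $x\mapsto x^{4}$, $y\mapsto y$; it induces the identity on $R/I$, yet its Jacobian at $0$ is singular. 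Your parenthetical justification (``their linear parts differ from the identity by something landing in $I\subset\mathfrak m$, so the linear parts are still invertible'') is exactly what fails: an element of $I$ may have nonzero linear part, and adding it to the identity can destroy invertibility. The lifting lemma the paper invokes handles this by choosing the lift more carefully, using the freedom to add elements of $J$ to each $\phi(x_i)$ so that the linear parts form a basis of $\mathfrak m/\mathfrak m^{2}$; equivalently, one first reduces to the case $I,J\subset\mathfrak m^{2}$ (where your argument \emph{does} work) by matching the order-one parts of $I$ and $J$ via a linear automorphism, which is possible since $R/I\cong R/J$ forces $\dim_K(I+\mathfrak m^{2})/\mathfrak m^{2}=\dim_K(J+\mathfrak m^{2})/\mathfrak m^{2}$.

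A minor remark on the ``moreover'' part: your concern about $\mng(J)=m$ is unnecessary and, as stated, not correct for $k=0$ (e.g.\ $A=[x,\ y]^{T}$, $B=[x,\ x]^{T}$ in $K[[x,y]]$ has $B-A\in\mathfrak m\,M_{2,1}$ but $\mng(J)=1$). The equivalence (1)$\Leftrightarrow$(4) in the proposition only assumes that $I$ and $J$ are each \emph{presented} by $m$ generators---namely the entries of $A$ and $B$---with no minimality hypothesis on $J$. So once $I\mathop\sim\limits^{c}J$ you may apply (1)$\Rightarrow$(4) directly to the given $A$ and $B$.
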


\begin{proof}
The equivalence of 1. and 2. follows from the lifting lemma \cite[Lemma 1.23]{GLS07}, that of 2. and 3.  from  Lemma \ref{Mather 1}. The equivalence of 3. and 4. is obvious. The last statement follows from the equivalence of 1. and 4. and from the definition of determinacy.
\end{proof}

%\begin{Lemma}{\label{finitely-determined}}
%Let  $I \subset \mathfrak{m}$ be an ideal of $R$ generated by $\langle a_1,\ldots, a_m\rangle$ and $A=[a_1\hskip 3pt a_2\hskip 3pt\ldots\hskip 3pt a_m]^T$. Then $I$ is (generator) $k$-determined iff $A$ is $G$ $k$-determined.
%\end{Lemma}
%
%\begin{proof}
%Let $I$ be $k$-determined. If $B=[b_1\hskip 3pt b_2\hskip 3pt\ldots\hskip 3pt b_m]^T \subset M_{m,1}$ satisfies $A-B \subset \mathfrak{m}^{k+1} M_{m,1}$ then $J = \langle b_1,\ldots, b_m\rangle$ satisfies $I+\mathfrak{m}^{k+1} =J+\mathfrak{m}^{k+1}$. Since $I$ is contact $k$-determined $I\mathop  \sim \limits^c J$ and hence  $A\mathop\sim\limits^G B$ by Proposition \ref{contact equivalence}. Thus  $A$ is $G$  $k$-determined. 
%
%Let now $A$ be $G$ $k$-determined. Let $J \subset \mathfrak{m}$ be any ideal with $I+\mathfrak{m}^{k+1} =J+\mathfrak{m}^{k+1}$. 
%\end{proof}

\begin{Definition}\label{definition tjurina}
	Let $I$ be a proper ideal of $R$ with $\mng(I)=m$ and $A\in M_{m,1}$ the column matrix corresponding to a minimal set of generators of $I$. We define
	\[T_I:=M_{m,1}{\Big/}\left(I\cdot M_{m,1}+\left\langle\frac{\partial A}{\partial x_1},\ldots,\frac{\partial A}{\partial x_s} \right\rangle\right)\]
	and set $\tau(I):=\dim_KT_I.$
\end{Definition}

\begin{Remark}\label{Tjurina}\rm
We have 
		$T_I=M_{m,1}\big/\tilde T_A^e\left(GA\right) = R^m\Big/IR^m+\left(\frac{\partial a_i}{\partial x_j}\right)\cdot R^s.$
%	where $\tilde T_A^e\left(GA\right)$ is the extended tangent image at $A$ to the orbit $GA$. In concrete terms, if $I=\langle a_1,\ldots, a_m\rangle\subset R$ set 
%		${\bf a}:=(a_1,\ldots,a_m)$. Then 
%		\[T_I = M_{m,1}/\tilde T_A^e\left(GA\right)\cong  R^m\Big/IR^m+\left(\frac{\partial a_i}{\partial x_j}\right)\cdot R^s,\]
%		where $\left(\frac{\partial a_i}{\partial x_j}\right): R^s\to R^m$ is the Jacobian matrix of ${\bf a}$ and $\left(\frac{\partial a_i}{\partial x_j}\right)\cdot R^s$ is the $R$-submodule of $R^m$ generated by the columns of the Jacobian matrix of ${\bf a}$.
%		
%\item
 By Proposition \ref{semi-continuity} $\tau(I)$ is semicontinuous if we perturb the entries of $A$.

For a complete intersection the $R/I$-module $T_I$ is called the {\em Tjurina module of I} and $\tau(I)=\dim_KT_I$ the {\em Tjurina number of I} (see \cite[Theorem 1.16 and Definition 1.19]{GLS07} for the complex analytic case). The support of $T_I$ is then the singular locus of $I$.

\end{Remark}

\begin{Theorem}{\label{geomchar}}
	Let $I$ be a proper ideal of $R$. 
	\begin {enumerate}
	\item  If $\dim(R/I) = 0$ then $I$ is finitely contact determined.
	\item  If  $\dim(R/I) >0$ and $K$ is infinite, then the following are equivalent:\\	
	(i) $I$ is finitely contact determined.  	\\
	(ii) $\tau(I)<\infty$.	\\
	(iii)  $R/I$ is an isolated complete intersection singularity. 	\\
	If one of these condition is satisfied then $I$ is contact $(2\tau(I)-\order(I)+2)$-determined.
	Moreover, if $J$ is an ideal such that $R/J$ is a deformation of $R/I$, then $J$ is contact $(2\tau(I)+1)$-determined resp. $2\tau(I)$-determined if $\order(J) \geq 2$.
	\end{enumerate}
\end{Theorem}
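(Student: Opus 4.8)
The plan is to reduce everything to the matrix statement proved in Theorem \ref{column matrix} and Corollary \ref{main corollary} via the dictionary of Proposition \ref{contact equivalence}, and then translate the algebraic conditions into the geometric language of isolated complete intersection singularities.

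\textbf{Proof plan.} First I would fix a minimal set of generators $a_1,\ldots,a_m$ of $I$, so $m=\mng(I)$, and let $A=[a_1\ \ldots\ a_m]^T\in M_{m,1}$ be the associated column matrix. By Proposition \ref{contact equivalence}, $I$ is contact $k$-determined if and only if $A$ is $G$ $k$-determined, so finite contact determinacy of $I$ is equivalent to finite $G$-determinacy of $A$.

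\emph{Part 1.} If $\dim(R/I)=0$ then $I$ is $\mathfrak m$-primary, i.e. $\mathfrak m\cdot I\supset\mathfrak m^{k+2}$ for some $k$. I would split into the cases $m\ge s$ and $m\le s$. If $m\ge s$, Corollary \ref{main corollary}.\ref{main 1} directly gives that $A$ is finitely $G$-determined. If $m\le s$, then $\langle a_1,\ldots,a_m\rangle+I_m(Jac(A))\supset\langle a_1,\ldots,a_m\rangle\supset\mathfrak m^{k+2}$, so Lemma \ref{isolated and presentation matrix} gives $\dim_K(M_{m,1}/\tilde T^e_A(GA))<\infty$ and Theorem \ref{column matrix} (the direction 2 $\Rightarrow$ 1, which does not need $K$ infinite) finishes it. Note in this case $K$ need not be infinite. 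For the bound one observes $\mathfrak m^{k+2}\subset\mathfrak m\cdot I_1(A)\cdot M_{m,1}\subset\mathfrak m\cdot\tilde T_A(GA)$ and invokes Theorem \ref{GP16}, giving contact $(2k-\order(I)+2)$-determinacy; taking $k=\tau(I)$-related quantities connects to the stated bound (here one should check $\mathfrak m^{\tau(I)+1}\subset I$ type inclusions via Lemma \ref{isolated and presentation matrix} and the length of $T_I$).

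\emph{Part 2.} Assume $\dim(R/I)>0$ and $K$ infinite. I claim (i) $\Leftrightarrow$ (ii) $\Leftrightarrow$ (iii). For (i) $\Leftrightarrow$ (ii): by Remark \ref{Tjurina}, $T_I=M_{m,1}/\tilde T^e_A(GA)$, so $\tau(I)<\infty$ is exactly condition 2 of Theorem \ref{column matrix}, equivalent to finite $G$-determinacy of $A$, equivalent to finite contact determinacy of $I$. (Here the full strength of Theorem \ref{column matrix}, including 1 $\Rightarrow$ 2 in positive characteristic, is what makes this work, and this is where $K$ infinite is used for $1<m<s$; for $m\ge s$ one uses Theorem \ref{height}.\ref{height 2}(ii), which would force $\dim(R/I)=0$, contradicting our hypothesis — so in fact $m<s$ automatically in Part 2, and $\mng(I)<\dim(R/I)+\mng(I)=s$ forces $m<s$; I would make this observation explicit.) For (ii) $\Leftrightarrow$ (iii): since $m<s$, Lemma \ref{isolated and presentation matrix} gives
\[
\sqrt{I+I_m(Jac(A))}=\sqrt{\Ann_R(T_I)},
\]
so $\tau(I)=\dim_K T_I<\infty$ iff $\dim(R/(I+I_m(Jac(A))))=0$ iff $I+I_m(Jac(A))\supset\mathfrak m^\ell$ for some $\ell$. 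Combined with the fact that finite determinacy forces, via Theorem \ref{height}.\ref{height is expected} (with $t=n=1$, $m_1=m$), $\height(I_1(A))=\min\{s,m\}=m$, i.e. $R/I$ is a complete intersection of the expected dimension $s-m$, this is precisely the definition of an ICIS. So I would argue: if $\tau(I)<\infty$ then $A$ is finitely determined (already shown), hence $\height(I)=m$ so $R/I$ is a complete intersection, and $I+I_m(Jac(A))$ is $\mathfrak m$-primary, i.e. ICIS; conversely ICIS gives $\dim(R/(I+I_m(Jac(A))))=0$ hence $\tau(I)<\infty$. Finally, the determinacy bound $2\tau(I)-\order(I)+2$: from $\tau(I)<\infty$ and Lemma \ref{isolated and presentation matrix} one has $\mathfrak m^{km}\subset I_m(\Theta_{(G,A)})$ for suitable $k$, but the sharper bound $2\tau(I)-\order(I)+2$ should come by reading off the module length: $\mathfrak m^{\tau(I)+1}\cdot M_{m,1}\subset \tilde T^e_A(GA)$ (since $T_I$ has $K$-length $\tau(I)$), so $\mathfrak m^{\tau(I)+2}\cdot M_{m,1}\subset\mathfrak m\cdot\tilde T^e_A(GA)$ — one must upgrade this to the non-extended $\tilde T_A(GA)$, which works because for $1$-column matrices $\tilde T_A(GA)=I\cdot R^m+\mathfrak m\cdot(\tfrac{\partial a_i}{\partial x_j})R^s$ and $\mathfrak m\cdot\tilde T^e_A$ absorbs the difference — then apply Theorem \ref{GP16} with $k=\tau(I)$. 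The deformation statement follows from the standard principle that a $k$-determined ideal is $k$-determined along any deformation when the Tjurina number does not jump, using semicontinuity of $\tau$ (Proposition \ref{semi-continuity}) and shifting the bound by one (resp. by zero when $\order\ge 2$) to account for the extra parameter direction.

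\textbf{Main obstacle.} The genuinely delicate point is not the equivalences — those are a clean translation of Theorem \ref{column matrix} and Lemma \ref{isolated and presentation matrix} — but pinning down the \emph{sharp} determinacy bound $2\tau(I)-\order(I)+2$ rather than a cruder bound like $2\tau(I)m-\order(I)+2$ from Corollary \ref{main corollary}.\ref{main 2}. This requires showing $\mathfrak m^{\tau(I)+2}M_{m,1}\subset\mathfrak m\cdot\tilde T_A(GA)$ directly from $\dim_K T_I=\tau(I)$, i.e. that the colength controls the power of $\mathfrak m$ annihilating $M_{m,1}/\tilde T^e_A(GA)$ as a module (Nakayama/Artinian-length argument: a cyclic-by-length-$\ell$ module over the local ring is killed by $\mathfrak m^\ell$), and then checking carefully that one may replace $\tilde T^e_A$ by $\tilde T_A$ at the cost of one extra factor of $\mathfrak m$ before quoting Theorem \ref{GP16}\,(1). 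The second mildly technical point is the deformation bound, where one must invoke semicontinuity of $\tau$ to ensure that on the total space of a deformation $R/J$ of $R/I$ the Tjurina number is still finite (bounded by $\tau(I)$ fiberwise) and re-run the determinacy estimate with the parameter treated as an extra variable, which is what produces the shift to $2\tau(I)+1$, improving to $2\tau(I)$ when $\order(J)\ge 2$ removes the degree-one correction in the bound of Theorem \ref{GP16}.
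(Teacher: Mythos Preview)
Your overall strategy matches the paper's: reduce to Theorem \ref{column matrix} via Proposition \ref{contact equivalence}, use Theorem \ref{height} to get the complete intersection property, and Lemma \ref{isolated and presentation matrix} for the isolatedness. The equivalences (i)$\Leftrightarrow$(ii)$\Leftrightarrow$(iii) in Part 2 are handled correctly.

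Three corrections, two minor and one substantive:

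\emph{Part 1 case split is superfluous.} If $\dim(R/I)=0$ then $I$ is $\mathfrak m$-primary, so $\height(I)=s$, and Krull's height theorem forces $m=\mng(I)\ge s$. Your case $m<s$ is vacuous; the paper simply notes $m\ge s$ and invokes Corollary \ref{main corollary}.\ref{main 1} directly.

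\emph{The ``sharp bound'' is not an obstacle.} Your worry about obtaining $2\tau(I)-\order(I)+2$ instead of the cruder $2km-\order(I)+2$ is misplaced: Theorem \ref{column matrix} already packages the bound $2d_e-\order(A)+2$ with $d_e=\tau(I)$, so one just quotes it. Your Nakayama/length argument rederives what that theorem already provides.

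\emph{The deformation bound is simpler, and your explanation for it is wrong.} There is no ``extra parameter direction'' and no ``re-running the estimate with the parameter treated as an extra variable.'' The paper's argument is: a deformation $R/J$ of an ICIS is again an ICIS, so the already-established bound gives that $J$ is contact $(2\tau(J)-\order(J)+2)$-determined. Semicontinuity (Remark \ref{Tjurina}) yields $\tau(J)\le\tau(I)$, and since $J\subset\mathfrak m$ we have $\order(J)\ge 1$, hence $2\tau(J)-\order(J)+2\le 2\tau(I)+1$; if $\order(J)\ge 2$ this improves to $2\tau(I)$. The shift by $+1$ comes from dropping $\order(J)$ to its worst case $1$, not from any parameter.
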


\begin{proof}
	Let $A$ be the column matrix corresponding to a minimal set of $m$ generators of $I$. 
	
	1. If $\dim(R/I) = 0$ then  some power of $\mathfrak m$ is contained in $I$ and $m = \mng(I) \geq s$. Then $A$ is finitely  $G$-determined by Corollary \ref{main corollary}.\ref{main 1} and hence $I$ is finitely contact determined by Proposition \ref{contact equivalence}. 
	
	2. The equivalence of (i) and (ii) as well as the determinacy bound follows from Theorem \ref{column matrix} without the assumption $\dim(R/I)>0$. 
	
	We prove the equivalence of (i) and (iii). If $I$ is finitely determined then $m < s$ by Theorem \ref{height}.\ref{height 2} (ii). By Theorem \ref{height}.\ref{height 2} (i), we get $\height(I_1(A))=m$, i.e. $\dim R/I = s-m$ and thus $R/I$ is a complete intersection. The isolatedness of $R/I$ and the converse direction (iii)  $\Rightarrow$ (i) are direct consequences of Corollary \ref{main corollary}.\ref{main 2}. 
		
	If $R/J$ is a deformation of $R/I$, $J$ defines an ICIS of the same dimension and hence $J$ is contact $(2\tau(J)+1)$-determined. By semicontinuity of $\tau$ (Remark \ref{Tjurina}) $\tau(I) \ge \tau(J)$, implying that $J$ is $(2\tau(I)+1)$-determined, resp. $2\tau(I)$-determined if ord($J$) $\geq 2$.
\end{proof}

%\begin{Remark}\rm
%	Note that over a base field $K$ of characteristic zero, the statement of Theorem \ref{geomchar} holds for a more general ring $R$, namely regular local ring over $K$ with the so-called ``relevant approximation property" in the sense of \cite[2.1.1]{BK16} (see \cite[Example 2.10. iv.]{BK16}).
%\end{Remark}
 
\begin {Problem}\rm
%(1)	The determinacy of $I$, i.e. the minimum $k$ such that $I$ is $k$-determined, is in general not semicontinuous under deformations of $I$ (cf. \cite[Exercise 2.2.4] {GLS07}). It is therefore important, in particular in connection with classification and moduli problems, to have $2\tau(I)+1$ as a semicontinuous bound for any deformation of $I$. In particular, all ideals in $R$ in a neighborhood of a finitely determined $I$ are parametrized up to contact equivalence by a finite dimensional subspace in the infinite space of coefficients of power series.
 Note that (ii) $\Leftrightarrow$ (iii) $\Rightarrow$ (i) in Theorem \ref{geomchar} holds for any $K$, as well as (i) $\Rightarrow$ (iii) for hypersurfaces by Theorem \ref{hypersurface}. However, we do not know whether (i) $\Rightarrow$ (iii) holds for finite $K$ and a complete intersection which is not a hypersurface.
\end {Problem}

%The fact that a complete intersection is finitely contact determined if it has an isolated singularity was already proved over the complex numbers by Brieskorn in the unpublished part of \cite[Theorem 1.3.1 and 1.3.5]{Bri67}. It is new and much more involved in positive characteristic. The determinacy bound is new in any characteristic.

We complete the section with the hypersurface case, i.e. ideals generated by one element $f\in R=K[[{\bf x}]]$.  
In addition to contact equivalence we consider also {\em right equivalence} for $f\in R$, with $f$ being right equivalent to $g$ if $\phi(f) = g$ for some $\phi \in Aut(K[[{\bf x}]])$. $f$ is {\em right k-determined} if any $g$ with $f-g \in \mathfrak m^{k+1}$ is right equivalent to $f$.

Then the Tjurina number is 
$$\tau(f)=\dim_K K[[{\bf x}]]/\langle f, {\partial f}/{\partial x_1}, \ldots, {\partial f}/{\partial x_s}\rangle$$
and the Milnor number is 
$$\mu(f)=\dim_K K[[{\bf x}]]/\langle {\partial f}/{\partial x_1}, \ldots, {\partial f}/{\partial x_s}\rangle.$$

\begin{Theorem}\label{hypersurface}
Let $K$ be any field and $f\in \mathfrak{m} \subset R$. 
\begin{enumerate}
\item $f$ is finitely contact determined iff $\tau(f)$ is finite. 
\item $f$ is finitely right determined iff $\mu(f)$ is finite.
\end{enumerate}
Moreover, if  $\tau(f) < \infty$ (resp. $\mu(f) < \infty$), then $f$ is contact $(2\tau(f)-\order(f)+2)$--determined
(resp. right $(2\mu(f)-\order(f)+2)$--determined). 
\end{Theorem}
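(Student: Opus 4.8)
The plan is to reduce both parts to results already established for $1$-column matrices, namely Theorem \ref{column matrix} and Corollary \ref{main corollary}, exploiting the fact that for $m=1$ we never need $K$ infinite. For part (1), a hypersurface ideal $\langle f\rangle$ with $f \in \mathfrak m$ corresponds to the $1\times 1$ matrix $A = [f] \in Mat(1,1,\mathfrak m)$, and by Proposition \ref{contact equivalence} contact $k$-determinacy of $\langle f\rangle$ is equivalent to $G$ $k$-determinacy of $[f]$. Here $Jac(A) = [\,\partial f/\partial x_1 \ \ldots \ \partial f/\partial x_s\,]$ and $I_1(Jac(A)) = \langle \partial f/\partial x_1,\ldots,\partial f/\partial x_s\rangle$, so $\langle f\rangle + I_1(Jac(A)) = \langle f, \partial f/\partial x_1,\ldots,\partial f/\partial x_s\rangle$ is exactly the Tjurina ideal, and $\tau(f) = \dim_K T_{\langle f\rangle} = \dim_K\bigl(R/\langle f\rangle + I_1(Jac(A))\bigr)$ by Remark \ref{Tjurina} and Lemma \ref{isolated and presentation matrix}. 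Then $\tau(f) < \infty$ iff this ideal is $\mathfrak m$-primary iff it contains some $\mathfrak m^k$, which by Corollary \ref{main corollary}.\ref{main 2} (applied with $m=1 \le s$, no infiniteness needed) is equivalent to $[f]$ being finitely $G$-determined, hence to $\langle f\rangle$ being finitely contact determined. The bound $(2km - \order(A)+2)$ of that corollary becomes $(2k - \order(f)+2)$ with $m=1$; taking $k$ minimal with $\mathfrak m^k \subset \langle f, \partial f/\partial x_i\rangle$ and noting $k \le \tau(f)$ (since $\dim_K R/\mathfrak{m}^k \le \dim_K R/(\text{Tjurina ideal}) = \tau(f)$ when the length is finite, more precisely $\mathfrak m^{\tau(f)} \subset$ the Tjurina ideal) gives the stated bound $(2\tau(f) - \order(f)+2)$.

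For part (2), right equivalence, I would run the analogous argument but with the tangent space for the contact group replaced by the one for the right group $\mathcal R = Aut(R)$ alone. One needs the analogue of Theorem \ref{GP16} and Corollary \ref{main corollary} for right equivalence of a single power series; this is precisely the content of \cite{BGM12}, which I would cite. Concretely, $f$ is finitely right determined iff the extended tangent space $\langle \partial f/\partial x_1,\ldots,\partial f/\partial x_s\rangle$ has finite $K$-codimension in $R$, i.e.\ iff $\mu(f) < \infty$; and the sufficiency direction with the determinacy bound $(2\mu(f)-\order(f)+2)$ follows from the finite-determinacy theorem for right equivalence (the Jacobian ideal contains $\mathfrak m^k$ for some $k \le \mu(f)$, giving $\mathfrak m^{k+1} \subset \mathfrak m \cdot \langle \partial f/\partial x_i\rangle$, hence $k$-plus-something determinacy; the bound is obtained exactly as in \cite{BGM12}). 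The necessity direction (finite right determinacy $\Rightarrow \mu(f)<\infty$) uses that finite codimension of the tangent space to the orbit is necessary for finite determinacy, which for $m=n=1$ and the right group is classical and characteristic-free.

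The main obstacle, and the only place requiring genuine care, is the necessity direction in positive characteristic: that a finitely contact (resp.\ right) determined $f$ forces $\tau(f) < \infty$ (resp.\ $\mu(f) < \infty$). For contact equivalence this is covered by Corollary \ref{main corollary}.\ref{main 2} with $m=1$, whose proof rests on Theorem \ref{height}.\ref{height 2}, and that part of Theorem \ref{height} was shown to hold for \emph{any} $K$ — so the hypersurface case indeed needs no infiniteness assumption, which is the point of the accompanying Problem. For right equivalence the necessity is not a corollary of the matrix machinery of this paper and must be taken from \cite{BGM12} (or reproved: if $\mu(f) = \infty$ then $\langle \partial f/\partial x_i\rangle$ has infinite codimension, and one constructs, for every $k$, a perturbation $f + \varepsilon$ with $\varepsilon \in \mathfrak m^{k+1}$ not right equivalent to $f$, e.g.\ by a dimension count on jet spaces since the orbit of the $(k{+}1)$-jet has positive codimension). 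I would simply invoke \cite{BGM12} for both the necessity and the sharp bound in the right-equivalence case, and present the contact case in full detail as the reduction sketched above.
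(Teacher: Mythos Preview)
Your contact-equivalence argument (part 1) is essentially the paper's own proof: both reduce to Theorem~\ref{column matrix} (or its Corollary~\ref{main corollary}) with $m=1$, where the infiniteness hypothesis on $K$ is not used, and the bound comes out the same way.

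The right-equivalence argument (part 2) has a genuine gap in the necessity direction. You propose to take ``finite right determinacy $\Rightarrow \mu(f)<\infty$'' from \cite{BGM12}, or alternatively from a jet-space dimension count (the orbit of $jet_{k+1}(f)$ having positive codimension). But both of these arguments rely on comparing the dimension of the orbit with the dimension of its tangent space, i.e.\ on separability of the orbit map. The paper's Remark immediately following Theorem~\ref{hypersurface} points out precisely this: the proof in \cite{BGM12} \emph{has a gap} because it assumes the orbit map $G^{(k)}\to G^{(k)}jet_k(f)$ is separable, which can fail in positive characteristic (see \cite[Example~2.9]{GP16}). So invoking \cite{BGM12} here is circular---fixing that gap is exactly what Theorem~\ref{hypersurface} is for---and your sketched alternative has the same defect.

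What the paper actually does for the necessity direction in part 2 is to run the same deformation argument as in the proof of Theorem~\ref{column matrix}, Case~1: assume $f$ is right $k$-determined, choose $N>k$ with $p\nmid N$, set $g=c_1x_1^N+\cdots+c_sx_s^N$ (which works for any $K$ since $m=1$), and consider $g+tf$. The Milnor number at $t=0$ is finite since $\partial g/\partial x_j = Nc_j x_j^{N-1}$ generate an $\mathfrak m$-primary ideal; by the semicontinuity of Proposition~\ref{semi-continuity} (applied to $R/\langle \partial(g+tf)/\partial x_j\rangle$) one gets $\mu(g+t_0f)<\infty$ for some $t_0\neq 0$; and since $t_0 f$ is right $k$-determined (scaling preserves right $k$-determinacy) and $g\in\mathfrak m^{k+1}$, one has $g+t_0f\sim_r t_0f$, whence $\mu(f)=\mu(t_0f)=\mu(g+t_0f)<\infty$. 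This is what ``the proof of the converse goes as for contact equivalence'' means, and it avoids separability entirely.
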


\begin{proof}
Since the statement is obviously true for $f=0$ we may assume that $f \neq 0$. For contact equivalence, the statement follows from Theorem \ref{column matrix} with $m=1$ (for this case it was not assumed that $K$ is infinite). For right equivalence, if $ \mu(f) < \infty$, then  $f $ is finitely right determined by \cite[Theorem 3.2]{GP16} for any $K$. The proof of the converse goes as for contact equivalence.
\end{proof}

\begin{Remark}\rm
Theorem \ref{hypersurface} was already stated  in \cite[Theorem 5]{BGM12}. However, the proof of the direction that finite determinacy implies the finiteness of $\tau$ resp. $\mu$ in \cite {BGM12} contains a gap 
since it assumes the orbit map $G^{(k)}\to G^{(k)}jet_k(f)$ to be separable. Separability of the orbit map holds always in characteristic 0  and quite often in positive characteristic, but not always (see \cite[Example 2.9]{GP16}). 
\end{Remark}

%matrices  $A=[a_{i,j}] \in M_{m,n}$. Let $\mathcal{R}:=Aut(R)$ be the right group acting by $\phi([a_{i,j}]) =  [\phi(a_{i,j}])$. Since this action does not use the matrix structure, we may without loss of generality that $A=[a_1 \cdots a_m]^T \in M_{m,1}$.
%If $Jac(A):=\left[ \frac{\partial a_i}{\partial x_j}\right]\in Mat(m,s,R)$ is the Jacobian matrix then the columns $ \frac{\partial A}{\partial x_j}$ of $Jac(A)$ generate $\widetilde T^e_A(GA) \subset R^m$ and $A$ is finitely right determined if $\dim_K R^m/\langle \frac{\partial A}{\partial x_j}\rangle $ is finite.
%
%It was shown in \cite[Remark 2.10]{GP16}

%\medskip

{\bf Acknowledgement:} We thank Andr\'e Galligo for suggesting Gabrielov's example and Ng{\^o}~V. Trung for useful discussions, in particular for providing a proof of Theorem \ref {generic determinantal ideals}. The second
author would like to thank the Abdus Salam International Centre for Theoretical Physics (ICTP) for support and the department of mathematics of the University of Kaiserslautern for its hospitality.

%%%%%%%%%%%%%%%%%%%%%%%
%%%%%%%%%%%%%%%%%%%%%%%
%\nocite{*}
%\bibliographystyle{amsalpha}
%\bibliography{Phamthuyhuong2}

\providecommand{\bysame}{\leavevmode\hbox to3em{\hrulefill}\thinspace}
\providecommand{\MR}{\relax\ifhmode\unskip\space\fi MR }
% \MRhref is called by the amsart/book/proc definition of \MR.
\providecommand{\MRhref}[2]{%
	\href{http://www.ams.org/mathscinet-getitem?mr=#1}{#2}
}
\providecommand{\href}[2]{#2}

Fachbereich Mathematik, Universit\"at Kaiserslautern, Erwin-Schr\"odinger Str.,
67663 Kaiserslautern, Germany

\noindent E-mail address: greuel@mathematik.uni-kl.de
\medskip
\vskip 3pt 

\noindent Department of Mathematics, Quy Nhon University, 170 An Duong Vuong, Quy Nhon, Vietnam

\noindent Email address: phamthuyhuong@qnu.edu.vn
\end{document}